\theoremstyle{plain}\usepackage{amsmath,amssymb,amsthm}\pagestyle{myheadings} \markboth{\textsc{\footnotesize{Author=Not yet}}}{\textsc{\footnotesize{Probability dynamics}}}
\newtheorem{theorem}{Theorem}
\newtheorem{lemma}[theorem]{Lemma}
\newtheorem{proposition}[theorem]{Proposition}
\newtheorem{corollary}[theorem]{Corollary}
\newtheorem{definition}[theorem]{Definition}
\newtheorem{remark}[theorem]{Remark}
\newtheorem{example}[theorem]{Example}
\newcommand{\N}{\mathbb{N}}
\newcommand{\R}{\mathbb{R}}
\renewenvironment{proof}[1][.]{%
\bigskip\noindent{\bf Proof#1 }}{%
\hfill$\blacksquare$\bigskip}
\newcommand{\de} {\delta}       
\newcommand{\vep}{\varepsilon}
\newcommand{\cP}{{\mathcal P}}
\begin{document}
\title[Dynamics and Entropy of the Push-Forward]
      {On the Dynamics and Entropy of the Push-Forward Map}

\author[A. Baraviera E. Oliveira  and F. B. Rodrigues]
       {  }

\subjclass{Primary: 58F15, 58F17; Secondary: 53C35.}
 \keywords{Dynamical Systems, Push Forward, Ergodic Theory, Entropy}

 \email{baravi@mat.ufrgs.br}
 \email{oliveira.elismar@gmail.com}
 \email{patropy@hotmail.com}

\thanks{The first author is partially supported by CNPq, CAPES and FAPERGS}

\maketitle

\centerline{\scshape A. Baraviera, E. Oliveira and F. B. Rodrigues}


\medskip
{\footnotesize
 \centerline{Instituto de Matem\'atica-UFRGS}
   \centerline{ Avenida Bento Gon\c{c}alves 9500 Porto Alegre-RS Brazil}
} 

\bigskip



\begin{abstract} In this work we  study the main dynamical properties of
the push forward map, a transformation in the space of probabilities $\cP(X)$
induced by a map $T:X\rightarrow X$, $X$ a compact metric space.
We also establish a connection between  topological entropies of $T$ and of the push forward map.
\end{abstract}
\maketitle

\section{Introduction}

  During the last years some effort has been made in order to
  endow a probability space (of a given metric space) with
  a Riemannian manifold structure. One of the ingredients
  is the notion of a tangent space, that need to be
  defined in this case, and this motivates, for example,
  the work of Kloeckner. This author fix a certain
  metric space (the circle) and a map on this
  set (a dynamical system); this map induces a transformation
  on the probability space, known as the push forward map,
  and he is able to show some dynamical properties of this map as,
  for example, the entropy and he uses this special case
  in order to give a description of the tangent space of
  the probabilities of the circle.

   Motivated by this work, we start to try to understand
   the relation between a dynamical systems on a compact metric
   space and the dynamical system induced on the probabilities:
   more specifically, to try to know which are the properties
   that are common to both transfomations.

    Some topological properties are inherited by the
  probability dynamics, but with certain losses: for example,
  in order to get transitivity in the probabilistic
  setting it is necessary to assume a very strong
  hypothesis, say, topological mixing for the map
  on the metric space.

   The topological entropy, on the other hand, can
   be bounded below by the entropy of the map $T$ and,
   if its positive, then the topological entropy
   on the probabilistic setting is  indeed infinity.

   The article is organized as follows: after giving
   the main concepts we present some results in a
   simple setting, assuming that the metric space
   is discrete. After this warm up we deal with some
   topological properties of the map $\Phi$ and, at the
   section \ref{entropia topologica}, we address the question of topological entropy.

\section{Motivation: the discrete case}
The goal of the present work is to study the dynamics of the push forward map which arises from a
continuous map $T: X\to X$, i.e., the map $\Phi \colon \cP(X) \to \cP(X)$ given by
$\Phi(\mu) := \mu \circ T^{-1}$.
As a first case it is natural to consider the situation where $X$ is a finite
set or a discrete infinite set. In that case we see that the map $T$ can be represented by a matrix, that we call $[T]$,
and the push forward map $\Phi:\mathcal{P}(X)\to \mathcal{P}(X)$ is then given by the adjoint of the matrix
$[T]$, i.e., $[\Phi]=[T]^{\ast}$.


\subsection{The finite case }
In this section we are going to consider finite spaces. We notice that in these cases X is not connected.
We consider $X=\{x_1,...,x_n\}$, and we identify a function $f: X \to \mathbb{R}$
as a vector in $\mathbb{R}^{n}$ by the linear isomorphism $\mathcal{L}: C^0(X) \to \mathbb{R}^{n}$ given by
$$
  \mathcal{L}(f)= (f(x_1), ..., f(x_n)).
$$
Then
$$
  C^0(X)=\{f:X\rightarrow \R | \; f \mbox{ is continuous}\}\cong \mathbb{R}^n,
$$
and it implies that $C^0(X)^{\prime}\cong (\mathbb{R}^{n})^{*}$ whose basis will be the dual of the canonical one
$$
 \{\delta_{x_{1}}, ..., \delta_{x_{n}}\},
$$
i.e
$$
 \int f d\delta_{x_{i}}= f(x_{i})\text{, for }i=1, ..., n.
$$
by  the identification
$$
  \mathcal{L}^{*}(\nu)=\mathcal{L}^{*}(\sum_{i=1}^n p_i\delta_{x{i}})= (p_1, ..., p_n).
$$

As $\mathcal{M}(X)\cong C^0(X)^{\prime}$,
where $\mathcal{M}(X)$ is the set of measures on $X$, then
\begin{align*}
\mathcal{P}(X)&=\Big\{\sum_{i=1}^np_i\delta_{x{i}}:0\leq p_i\leq0 \mbox{ and }\sum_{i=1}^np_i=1\Big\}
\\&
\cong\Big\{(p_1,...,p_n)\in\mathbb{R}^n:0\leq p_i\leq0 \mbox{ and } \sum_{i=1}^n p_i=1\Big\}.
\end{align*}
So, in that case, the push forward of $T$, i.e., the transformation  $\Phi$, is a map on the simplex
$\Delta_n:=\Big\{(p_1,...,p_n)\in\mathbb{R}^n:0\leq p_i\leq0 \mbox{ and } \sum_{i=1}^np_i=1\Big\}$,
$\Phi:\Delta_n\rightarrow \Delta_n$.

Given $T:X \to X$ a continuous map, we can set a $n\times n$ matrix of zero-one entries $[T]$, that represents $T$ as follows:
$$
 [T]\left(
       \begin{array}{c}
         x_{1} \\
         \vdots \\
         x_{n} \\
       \end{array}
     \right)=\left(
               \begin{array}{c}
                 T(x_{1}) \\
                 \vdots \\
                 T(x_{n}) \\
               \end{array}
             \right)
$$
where
$$
 [T]_{ij}= \left\{
              \begin{array}{ll}
                1, &\text{ if }T(x_i)=x_j\\
                0, & \text{ otherwise.}
              \end{array}
            \right.
$$
We can also identify the integrals in the original space with the inner product:
$$
 \int f d\nu= \langle \mathcal{L}(f), \mathcal{L}^{*}(\nu)\rangle=\sum_{i=1}^n p_i f(x_i).
$$

In order to establish a matrix for the push forward of $T$ we recall the formula of
change of variables (see Lemma~\ref{Change of variables}),
$$
 \int f\circ T d\nu=\int f d \Phi(\nu).
$$

We claim that $\mathcal{L}(f\circ T)= [T] \mathcal{L}(f)$. Indeed, for any $1 \leq i \leq n$ we have the coordinate
$$
 (\mathcal{L}(f\circ T))_{i}= f(x_j)\text{ if }T(x_i)=x_j,
$$
that is
$$
  (\mathcal{L}(f\circ T))_{i}= f(x_j)= f(T(x_i))=([T] \mathcal{L}(f))_{i},
$$
proving the equality.

From this, we have proved the following.
\begin{proposition}\label{Change of variables discrete}
    Let $\Phi$ be the push forward map associated to $T$ and $[\Phi]$ his matrix as above i.e,
    if $\nu=\sum_{i=1}^n p_i\delta_{x{i}}$ then
$$
 \Phi\Big(\sum_{i=1}^n p_i\delta_{x{i}}\Big)=\sum_{i=1}^n q_i\delta_{x{i}}
    \Leftrightarrow [\Phi]\mathcal{L}^{*}(\nu) = \mathcal{L}^{*}(\Phi(\nu)).
$$
Hence, $[\Phi]= [T]^{\ast}$ (the adjoint matrix).
\end{proposition}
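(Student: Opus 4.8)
The plan is to reduce everything to the change of variables formula (Lemma~\ref{Change of variables}) combined with the two facts already established above: the inner-product representation $\int f\, d\nu = \langle \mathcal{L}(f), \mathcal{L}^{*}(\nu)\rangle$, and the identity $\mathcal{L}(f\circ T) = [T]\,\mathcal{L}(f)$. First I would dispose of the biconditional in the statement, which is essentially a matter of bookkeeping: since the push-forward of measures is linear, $\Phi$ extends to a linear endomorphism of $\mathcal{M}(X)\cong \mathbb{R}^{n}$, and $[\Phi]$ is by definition the matrix representing $\Phi$ in the coordinates supplied by $\mathcal{L}^{*}$, so the equivalence simply records that $[\Phi]\,\mathcal{L}^{*}(\nu) = \mathcal{L}^{*}(\Phi(\nu))$ is what it means for $\Phi$ to send $\sum_i p_i\delta_{x_i}$ to $\sum_i q_i\delta_{x_i}$.

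The substantive claim is $[\Phi] = [T]^{\ast}$, and I would obtain it by transcribing the change of variables identity into matrix language. Starting from $\int f\circ T\, d\nu = \int f\, d\Phi(\nu)$, I rewrite the left-hand side as $\langle \mathcal{L}(f\circ T), \mathcal{L}^{*}(\nu)\rangle = \langle [T]\,\mathcal{L}(f), \mathcal{L}^{*}(\nu)\rangle$ using the established identity, and the right-hand side as $\langle \mathcal{L}(f), \mathcal{L}^{*}(\Phi(\nu))\rangle = \langle \mathcal{L}(f), [\Phi]\,\mathcal{L}^{*}(\nu)\rangle$. Applying the defining property of the adjoint to the left-hand side then yields, for every $f$ and every $\nu$,
$$
 \langle \mathcal{L}(f),\, [T]^{\ast}\mathcal{L}^{*}(\nu)\rangle = \langle \mathcal{L}(f),\, [\Phi]\,\mathcal{L}^{*}(\nu)\rangle,
$$
so that $\langle \mathcal{L}(f),\, ([\Phi]-[T]^{\ast})\,\mathcal{L}^{*}(\nu)\rangle = 0$ throughout.

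Finally I would extract the equality of matrices by nondegeneracy. Because $\mathcal{L}$ is an isomorphism, $\mathcal{L}(f)$ ranges over all of $\mathbb{R}^{n}$, and hence vanishing of the inner product for every $f$ forces $([\Phi]-[T]^{\ast})\,\mathcal{L}^{*}(\nu) = 0$ for each $\nu$. The one point that deserves a word of care is that $\nu$ is only allowed to run over probability measures, so a priori $\mathcal{L}^{*}(\nu)$ sweeps out the simplex rather than all of $\mathbb{R}^{n}$; but taking $\nu = \delta_{x_i}$ gives $\mathcal{L}^{*}(\delta_{x_i}) = e_i$, and the $n$ standard basis vectors already span $\mathbb{R}^{n}$, so $[\Phi]-[T]^{\ast}=0$. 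This verification that the Dirac masses supply a spanning set is the only step requiring attention; the remainder is a direct computation once the change of variables formula and the identity $\mathcal{L}(f\circ T)=[T]\mathcal{L}(f)$ are in hand.
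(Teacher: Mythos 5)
Your proposal is correct and follows essentially the same route as the paper's own proof: both pass from the change of variables formula to the inner-product identity, use $\mathcal{L}(f\circ T)=[T]\mathcal{L}(f)$ together with the adjoint property, and conclude by letting $\mathcal{L}(f)$ range over all of $\mathbb{R}^{n}$. Your explicit remark that $\mathcal{L}^{*}(\nu)$ only sweeps the simplex, remedied by noting that the Dirac masses $\delta_{x_i}$ give the spanning vectors $e_i$, is a small point of extra rigor that the paper leaves implicit, but it does not change the argument.
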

\begin{proof}
We just observe that, the change of variables
$$
 \int f\circ T d\nu=\int f d \Phi(\nu),
$$
is equivalent to
$$
  \langle \mathcal{L}(f \circ T), \mathcal{L}^{*}(\nu)\rangle  =
      \langle \mathcal{L}(f ), \mathcal{L}^{*}(\Phi(\nu))\rangle.
$$
We have proved that $\mathcal{L}(f\circ T)= [T] \mathcal{L}(f)$, so
$$
 \langle \mathcal{L}(f \circ T), \mathcal{L}^{*}(\nu)\rangle=\langle
  [T] \mathcal{L}(f), \mathcal{L}^{*}(\nu)\rangle= \langle  \mathcal{L}(f), [T]^{t} \mathcal{L}^{*}(\nu)\rangle,
$$
so
$$
 \langle  \mathcal{L}(f), [T]^{t} \mathcal{L}^{*}(\nu)\rangle =
   \langle \mathcal{L}(f ), \mathcal{L}^{*}(\Phi(\nu))\rangle, \; \forall f (\text{ i.e }\forall \mathbb{R}^{n}),
$$
thus we get $[\Phi]= [T]^{\ast}$.
\end{proof}

\begin{example}
 We consider $X=\{x_0,x_1,...,x_{n-1}\}$ and the map $T:X\rightarrow X$ given by
$$
 T(x_i)=x_{i+1 \mbox{ mod n}}.
$$
Then the matrix of  $T$ is
$$
 [T]=\left[
        \begin{array}{cccccc}
          0 & 0 & 0&\ldots & 0 & 1 \\
          1 & 0 & 0&0 & \ldots & 0 \\
          0 & 1 & 0 & 0 & \ldots&0 \\
          \cdots & \cdots & \cdots& \cdots & \cdots&\cdots \\
          0 & \ldots & 0 & 0 & 1&0 \\
        \end{array}
      \right].
$$
 Given  $\nu=\sum_{i=0}^{n-1}p_i\delta_{x_i}\in\mathcal{P}(X)$,
 if $\Phi:\mathcal{P}(X)\rightarrow \mathcal{P}(X)$ is the push forward of $T$, then
$$
\Phi(\nu)=\sum_{i=0}^{n-1}p_i\delta_{x_i+1 \mbox{ mod n}}.
$$
If we consider $\nu$ as the vector $\nu=(p_0,...,p_{n-1})$, we see that $\Phi(p_0,...,p_{n-1})=(p_{n-1},p_1,p_2,...,p_0)$. Then we conclude that
$$
 [\Phi]=\left[
        \begin{array}{cccccc}
          0 & 1 & 0 & 0 &  \ldots & 0 \\
          0 & 0 & 1 & 0 &  \ldots & 0 \\
     \vdots & 0 & 0 & 1 &  \ldots&0 \\
          0 &\vdots& \ddots& \ddots & 1&\vdots \\
          1 & 0 & \ldots & 0 & 0&0 \\
        \end{array}
      \right]
    = [T]^*
 $$
\end{example}

For the analogous of maps of degree $d$ on $S^1$ we have:
\begin{example}\label{example where T is not a bijection}
Let $X=\{x_0,x_1,x_2,x_3\}$ and $T:X\rightarrow X$, given by
$$
 T(x_i)=x_{2i\mbox{ mod }4}.
$$
Then we have that $T(X)=\{x_0,x_2\}$. Given $\nu=\sum_{i=0}^3p_i\delta_{x_{i}}\in\mathcal{P}(X)$,
if $\Phi:\mathcal{P}(X)\rightarrow \mathcal{P}(X)$ is the push forward of $T$, we can see that
$$
 \Phi(\nu)=(p_0+p_2)\delta_{x_{0}}+(p_1+p_3)\delta_{x_{2}}.
$$
If we consider the measure $\nu$ as the vector $[\nu]=(p_0,p_1,p_2,p_3)^{t}$ then \\
$$
 \Phi(\nu)=[\Phi][\nu],
$$
where

$$
 [\Phi][\nu]=\left[
              \begin{array}{cccc}
                1 & 0 & 1 & 0 \\
                0 & 0 & 0 & 0 \\
                0 & 1 & 0 & 1 \\
                0 & 0 & 0 & 0 \\
              \end{array}
            \right]
\left[
     \begin{array}{c}
       p_0 \\
       p_1 \\
       p_2 \\
       p_3 \\
     \end{array}
   \right].
$$
So, we get $[\Phi]$, which is equal to the adjoint $[T]^{*}$.
\end{example}

\subsection{The infinite case }
In this section we will consider a set $X$  infinite and discrete.
In that case we know that $X=\{x_1,x_2,...\}$ is a  countable set. We endow $X$ with the discrete topology.
We have that the distance on $X$ given by
$$
 d(x_n,x_m)=\left\{
               \begin{array}{cc}
                 1, & \mbox{ if  } n\not=m\\
                 0, & \mbox{ otherwise} \\
               \end{array}
             \right.,
$$
generates the discrete topology on $X$, and with this topology $X$ is not compact.
It is not difficult to see that the set of probability measures on $X$ is given by
$$
 \mathcal{P}(X)=\Big\{\sum_{i=1}^{\infty}p_{i}\delta_{x_{i}}: x_i\in X,\ p_i\geq0,
   \ \sum_{i=1}^{\infty}p_{i}=1\Big\},
$$
and it is also a non compact set.

Let us consider a map $T:X\rightarrow X$ and $\Phi:\mathcal{P}(X)\rightarrow \mathcal{P}(X)$ its push forward.
As in the finite case, we can associate to $T$ a zero-one  matrix, but now it is  an infinite matrix.
Again, if $[T]$ is the matrix associated to the map $T$ we have that the matrix associated to $\Phi$
satisfies the condition $[\Phi]=[T]^{\ast}$.


As $\mathcal{P}(X)$ is convex but not compact, we can not apply the \emph{Schauder Fixed Point Theorem}, but we have the following:

\begin{theorem}\label{Ponto fixo para X discreto infinito}
Let $T:X\rightarrow X$ be a map and $\Phi:\mathcal{P}(X)\rightarrow\mathcal{P}(X)$ its push forward. Then
$T$ has a periodic point if and only if $\Phi$ has a fixed point.
\end{theorem}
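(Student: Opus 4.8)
The plan is to treat the two implications separately, using the basic observation that a fixed point of $\Phi$ is exactly a $T$-invariant probability measure: indeed $\Phi(\mu)=\mu$ means $\mu\circ T^{-1}=\mu$, i.e. $\mu(T^{-1}(A))=\mu(A)$ for every $A\subseteq X$. The whole theorem can thus be read as: on a discrete space, $T$ admits an invariant probability if and only if it has a periodic orbit.

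For the direction ``periodic point $\Rightarrow$ fixed point'', suppose $T^{p}(x)=x$ with orbit $\{x,T(x),\dots,T^{p-1}(x)\}$. First I would record that the push-forward of a Dirac mass satisfies $\Phi(\delta_{y})=\delta_{T(y)}$ (immediate from $\delta_y(T^{-1}(A))=\mathbf 1_{A}(T(y))$, and consistent with $[\Phi]=[T]^{*}$). Setting
$$
  \mu=\frac1p\sum_{k=0}^{p-1}\delta_{T^{k}(x)},
$$
linearity of $\Phi$ gives $\Phi(\mu)=\frac1p\sum_{k=0}^{p-1}\delta_{T^{k+1}(x)}$, and since $T^{p}(x)=x$ the index shift merely permutes the orbit, so $\Phi(\mu)=\mu$. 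This direction is essentially free.

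The substantial direction is ``fixed point $\Rightarrow$ periodic point''. Write a fixed point as $\mu=\sum_{i}p_{i}\delta_{x_{i}}$ with $p_i\ge 0$ and $\sum_i p_i=1$. Testing invariance on the singleton $\{x_{j}\}$ and using $T^{-1}(\{x_j\})=\{x_i:T(x_i)=x_j\}$ yields the balance equation
$$
  p_{j}=\sum_{i\,:\,T(x_{i})=x_{j}}p_{i}.
$$
The key idea is to follow an atom of maximal mass. I would first check that $M:=\sup_i p_i$ is actually attained: since $\sum_i p_i=1$, the set $\{i:p_i>M/2\}$ is finite, so the supremum is a maximum, and $M>0$. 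Pick $x_{i_0}$ with $p_{i_0}=M$. Because $x_{i_0}$ is one of the preimages of $T(x_{i_0})$, the balance equation gives $p_{T(x_{i_0})}\ge p_{i_0}=M$, hence $p_{T(x_{i_0})}=M$ by maximality. Thus the forward image of a maximal atom is again a maximal atom, and by induction the whole forward orbit $x_{i_0},T(x_{i_0}),T^{2}(x_{i_0}),\dots$ consists of atoms of mass $M$.

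Finally, since each maximal atom carries mass $M>0$ and the total mass is $1$, there are at most $1/M$ of them, i.e. finitely many. The forward orbit of $x_{i_0}$ therefore lives in a finite set and must revisit a point: there exist $k<l$ with $T^{k}(x_{i_0})=T^{l}(x_{i_0})$, so $T^{k}(x_{i_0})$ is a periodic point of $T$. The main obstacle is precisely this hard direction, where non-compactness of $\mathcal{P}(X)$ rules out any fixed-point-theorem shortcut; the crux is to show the maximal mass is attained (a consequence of summability) and that it propagates forward along orbits, after which finiteness of the maximal atoms and the pigeonhole principle close the argument.
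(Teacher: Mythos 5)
Your proposal is correct, and the easy direction (averaging Dirac masses along a periodic orbit) coincides with the paper's; but in the hard direction your route is genuinely different. The paper fixes an arbitrary atom $x_j$ with $p_j>0$ and follows its \emph{preimages}, splitting into two cases: if $T$ is a bijection, invariance gives $p_j=p_{T^{-k}(j)}$ for all $k$, so a non-recurrent backward orbit would carry infinitely many atoms of mass $p_j$, contradicting $\mu(X)=1$; if $T$ is not a bijection, each preimage set $T^{-k}(j)$ carries total mass $p_j$, so if these sets were pairwise disjoint the total mass would again be infinite, whence two of them intersect, and that intersection forces a periodic point. Your argument instead follows the \emph{forward} orbit of an atom of maximal mass: the balance equation $p_j=\sum_{i:T(x_i)=x_j}p_i$ shows the mass is non-decreasing under $T$, hence constant equal to $M$ along the orbit of a maximal atom, and since there are at most $1/M$ atoms of mass $M$, the pigeonhole principle yields a repetition in the orbit, i.e.\ a periodic point. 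What your approach buys is the elimination of the bijective/non-bijective case division and a direct construction of the periodic point (rather than two separate contradiction arguments), at the modest price of the small lemma --- which you prove correctly --- that the supremum $M=\sup_i p_i$ is attained and positive. Both proofs ultimately exploit the same finiteness of total mass; they simply run the dynamics in opposite directions.
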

\begin{proof}
If there exists $p\in X$ and $n\in\mathbb{N}$ such that $T^n(p)=p$,
then we have that
$$
 \mu = \displaystyle\frac{1}{n}\Big(\delta_{x}+\delta_{T(x)}+...+\delta_{T^{n-1}(x)}\Big)\in\mathcal{P}(X)
$$
 is a fixed point to $\Phi$.

 For the converse, we will divide in two cases, the first one where  $T$ is a bijection.
  It is also possible to think on $T$ as a map
 from
 $\mathbb{N}$ to $\mathbb{N}$, i.e, $T:\mathbb{N}\rightarrow \mathbb{N}$ by means
 of the identification $T(x_i)=x_j  \leftrightarrow T(i)=j $.
 If $\mu=\sum_{i=1}^{\infty}p_{i}\delta_{i}$
 is such that $\Phi(\mu)=\mu$, then
 $$
  \mu = \sum_{i=1}^{\infty}p_{i}\delta_{i}=\sum_{i=1}^{\infty}p_{i}\delta_{T(i)}=\sum_{i=1}^{\infty}p_{T^{-1}(i)}\delta_{i}
 $$
   So we have that $p_{i}=p_{T^{-1}(i)}$, for all $i\in\mathbb{N}$.
 As $\mu$ is a probability measure there exists $p_{j}\not=0$.  Since
 $\Phi(\mu)=\mu$, $\Phi^k(\mu)=\mu$ and it implies that
 $p_{j}=p_{T^{-k}(j)}$ for all $k\in\mathbb{N}$. If $T^{-k}(j)\not=j$ for all $k\in\mathbb{N}$,
 we have that the set $\{T^{-k}(j)=j_k:  k\in\mathbb{N}\}$ is an infinite subset
 of $\mathbb{N}$, and we can write $\mu $ as the following
 $$
 \mu=
 \sum_{k=1}^{\infty}p_{j_k}\delta_{j_k}+\sum_{i\not=j_k \ \forall k }^{\infty}p_{i}\delta_{i}=
 \sum_{k=1}^{\infty}p_{T^{-k}(j)}\delta_{j_k}+\sum_{i\not=j_k \ \forall k }^{\infty}p_{i}\delta_{i}
 =\sum_{k=1}^{\infty}p_{j}\delta_{j_k}+\sum_{i\not=j_k \ \forall k }^{\infty}p_{i}\delta_{i},
 $$
 and it implies that $\mu(\mathbb{N})=\infty$, which is a contradiction.

 For the second case we suppose $T$ is a non bijective map;
 again we can think on $T$ as a map from $\mathbb{N}$ to $\mathbb{N}$.
 Let $\mu=\sum_{i=1}^{\infty}p_{i}\delta_{i}\in\mathcal{P}(X)$ the fixed point of $\Phi$.
 As $\Phi^{k} (\mu)=\mu$ for all $k\in\mathbb{N}$, we have that
 $$
  \mu = \sum_{i=1}^{\infty}p_{i}\delta_{i}=\Phi^{k} (\mu)=\sum_{i=1}^{\infty}p^{k}_{i}\delta_{i},
 $$
 where  $p^{k}_{i}=\sum_{l=1}^{\infty}p_{i^{k}_{l}}=p_{i}$ is given by the set
 $T^{-k}(i)=\{i^{k}_{1},i^{k}_{2},i^{k}_{3},...\}$. We know that there exists $p_j$ such that
 $p_j\not=0$. If $T^{-n}(j)\cap T^{-m}(j)\not=\emptyset$ with $m<n$, then there exists
 $i\in\mathbb{N}$ such that $T^{n}(i)=T^{m}(i)$, and it implies that $T^{n-m}(T^{m}(i))=T^{m}(i)$,
 i.e, $T^{m}(i)$ is a periodic point for $T$. If $T^{-n}(j)\cap T^{-m}(j)=\emptyset$ with $m\not=n$,
  then we can write $\mu$ as the following
$$
\mu=\sum_{j^{1}_{l}\in T^{-1}(j)}^{\infty}p^{1}_{j^{1}_{l}}\delta_{j^{1}_{l}}+
\sum_{j^{2}_{l}\in T^{-2}(j)}^{\infty}p^{2}_{j^{2}_{l}}\delta_{j^{2}_{l}}
+\sum_{j^{3}_{l}\in T^{-3}(j)}^{\infty}p^{3}_{j^{3}_{l}}\delta_{j^{3}_{l}}
+...+\sum_{i\not\in T^{-k}(j), \ \forall k\in\mathbb{N}}^{\infty}p_{i}\delta_{i}.
$$
It implies that $\mu(X)=\infty$, because
$\displaystyle\sum_{j^{k}_{l}\in T^{-k}(j)}^{\infty}p^{k}_{j^{k}_{l}}=p_{j}$ for all
$k\in\mathbb{N}$, and it is a contradiction. Then there exist
$m,n\in\mathbb{N}$ such that $T^{-n}(j)\cap T^{-m}(j)\not=\emptyset$, and by the above, we get that $T$ has a periodic point.
\end{proof}

\begin{example}
Let $X=\{x_1,x_2,...\}$, $x_i\not=x_j$ for $i\not=j$,  and $T:X\rightarrow X$ given by $T(x_i)=x_{i+1}$.
Then, since $T$ has no periodic point, by Theorem
\ref{Ponto fixo para X discreto infinito} we see that $\Phi$ has no fixed point.
\end{example}

\section{The push forward  map $\Phi$ and some metrics on  $\mathcal{P}(X)$ }

Let $X$ a connected compact separable metric space.
If we consider a continuous map $T \colon X \rightarrow X$ it induces a map
$$
 \Phi: \mathcal{P}(X)\rightarrow \mathcal{P}(X),
$$
where $\Phi(\mu)(A)=\mu(T^{-1}(A))$. This map is called {\it the push forward of} $T$.
We are interested in the study of the dynamics of the map $\Phi$.
To do it we observe that there are metrics on $\mathcal{P}(X)$, whose make this set a compact metric space,
since $X$ is also compact.
\begin{proposition}
If we consider $\mathcal{P}(X)$ with the weak topology and T is continuous,
$\Phi$ is continuous. If $T$ is an homeomorphism  then $\Phi$ is an homeomorphism.\\
\end{proposition}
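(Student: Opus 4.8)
The plan is to exploit the change of variables formula $\int f \circ T \, d\mu = \int f \, d\Phi(\mu)$ together with the standard description of the weak topology: a net $\mu_n$ (and, since $X$ is compact metric, hence $\mathcal{P}(X)$ is metrizable, a sequence suffices) converges to $\mu$ precisely when $\int f \, d\mu_n \to \int f \, d\mu$ for every $f \in C^0(X)$. So to prove continuity I would reduce everything to checking convergence of integrals against arbitrary continuous test functions, and transport that convergence through $\Phi$ using the change of variables identity.

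Concretely, for the first assertion I would take $\mu_n \to \mu$ weakly and fix an arbitrary $f \in C^0(X)$. The key observation is that continuity of $T$ makes $f \circ T$ again continuous, so $f \circ T \in C^0(X)$ is an admissible test function. Applying weak convergence of $\mu_n$ to $f \circ T$ and the change of variables formula on both ends gives
$$
\int f \, d\Phi(\mu_n) = \int f \circ T \, d\mu_n \longrightarrow \int f \circ T \, d\mu = \int f \, d\Phi(\mu).
$$
Since $f$ is arbitrary, this says $\Phi(\mu_n) \to \Phi(\mu)$ weakly, establishing continuity of $\Phi$.

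For the homeomorphism statement I would assume $T$ is a homeomorphism, so that $T^{-1}$ is also a continuous self-map of $X$; by the first part its push forward $\Psi$, defined by $\Psi(\nu)(A) = \nu(T(A))$, is continuous as well. It then remains to identify $\Psi$ with $\Phi^{-1}$. Using the definition of the push forward and the bijectivity of $T$,
$$
\Psi(\Phi(\mu))(A) = \Phi(\mu)(T(A)) = \mu\big(T^{-1}(T(A))\big) = \mu(A),
$$
and symmetrically $\Phi(\Psi(\nu)) = \nu$, so $\Phi$ is a continuous bijection with continuous inverse $\Psi$, i.e.\ a homeomorphism.

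The argument is essentially routine, and I do not expect a serious obstacle. The single point that genuinely carries the proof is the remark that continuity of $T$ is exactly what guarantees $f \circ T \in C^0(X)$, so that weak convergence can be pushed through $\Phi$; without this, the test function $f \circ T$ would not be available. In the homeomorphism case the only mild subtlety is to recognize that the inverse of $\Phi$ is the push forward of $T^{-1}$, and not some other transformation, which is where the identity $T^{-1} \circ T = \mathrm{id}$ coming from bijectivity of $T$ is used.
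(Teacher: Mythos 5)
Your argument is correct and complete. Note, however, that the paper does not actually prove this proposition: its ``proof'' consists of the single line ``See \cite{Gigli}'', deferring to the literature, so your write-up supplies the content that the citation hides. The route you take---testing weak convergence against an arbitrary $f \in C^0(X)$, observing that continuity of $T$ is exactly what makes $f \circ T$ an admissible test function, and transporting convergence through the identity $\int f \, d\Phi(\mu) = \int f \circ T \, d\mu$---is the standard one, and it uses only tools the paper already has on hand: the change of variables Lemma and the metrizability of the weak topology on $\mathcal{P}(X)$ (the paper's lemma on metrics), which justifies your reduction to sequences. Your treatment of the inverse is also sound: identifying $\Phi^{-1}$ with the push forward of $T^{-1}$ via $T^{-1}(T(A)) = A$ and $T(T^{-1}(A)) = A$ is precisely where bijectivity enters, and continuity of the inverse then follows from the first part applied to $T^{-1}$. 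One small point worth making explicit is that $\Psi(\nu)(A) = \nu(T(A))$ is well defined on Borel sets because $T(A) = (T^{-1})^{-1}(A)$ is Borel when $T$ is a homeomorphism; this is implicit in your identification of $\Psi$ as the push forward of $T^{-1}$. In short, what your proof buys over the paper's is self-containedness, at essentially no extra cost.
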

\begin{proof}
See \cite{Gigli}.
\end{proof}

  We are interested in three  particular metrics on $\mathcal{P}(X)$. The first one is the
Prokhorov metric,  defined by
$$
d_{P}(\nu,\mu)=\inf\{\alpha>0:\mu(A)\leq\nu(A_{\alpha})+\alpha
 \mbox{ and } \nu(A)\leq\mu(A_{\alpha})+\alpha ,\ \ \forall A\in \mathcal{B}(X)
\},
$$
where $A_{\alpha}:=\{x\in X: d(x,A)<\alpha\}$. The second one is the the\emph{ weak-$\ast$
distance} (on a locally compact metric space) defined by
$$
 d(\mu,\nu)=\sum_{i=1}^{\infty}\frac{1}{2^{i}}\Big|\int_{X}g_{i}(x)d\mu-\int_{X}g_{i}(x)d\nu\Big|,
$$
where $g_{i}:X\rightarrow [0,1]$ is continuous for all
$i\in\mathbb{N}$ and $\{g_{i}\}_{i\in\mathbb{N}}$ is an enumerable dense set in $C(X,[0,1])$.
The last one is the \textit{Wasserstein metric},  defined by
$$
 W_p(\mu,\nu)=\Big(\inf_{\Pi}\Big\{\int_{X\times X}d^p(x,y)d\Pi\Big\}\Big)^{\frac{1}{p}},
$$
where $\Pi$ is a transport from $\mu$ to $\nu$, say,  a probability on $X \times X$
whose marginals are
$\mu$ and $\nu$.

\begin{lemma}\label{lema1}
(i) All the metrics above generates the weak topology,
\\
(ii) If is $X$ is a compact Polish space, then $\mathcal{P}(X)$ with any of the above metrics is a compact Polish space.
\end{lemma}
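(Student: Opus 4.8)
The plan is to treat the two parts in sequence, reducing everything to a single classical fact: for a compact metric space $X$ the weak topology on $\mathcal{P}(X)$ is compact and metrizable, and a compact metrizable space is automatically Polish. Since the topology of a metric space is completely determined by its convergent sequences, for part (i) it suffices to verify that convergence in each of the three metrics is equivalent to weak convergence, i.e.\ that $d(\mu_n,\mu)\to 0$ if and only if $\int f\,d\mu_n\to\int f\,d\mu$ for every $f\in C(X)$. Once all three metrics are shown to share the same convergent sequences with the same limits, they induce the same closed sets and hence the same topology.

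First I would handle the weak-$\ast$ distance, which is the most transparent. Because $\{g_i\}$ is dense in $C(X,[0,1])$, the series $\sum_i 2^{-i}\big|\int g_i\,d\mu-\int g_i\,d\nu\big|$ tends to $0$ along a sequence precisely when $\int g_i\,d\mu_n\to\int g_i\,d\mu$ for each fixed $i$; an $\varepsilon/3$ argument using density then upgrades this to $\int f\,d\mu_n\to\int f\,d\mu$ for all $f\in C(X,[0,1])$, and linearity extends it to all of $C(X)$. Thus this metric generates the weak topology.

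For the Prokhorov and Wasserstein metrics I would invoke the classical equivalence with weak convergence. For $d_P$ this is the Portmanteau theorem together with Prokhorov's characterization: on a separable metric space $d_P(\mu_n,\mu)\to0$ if and only if $\mu_n\to\mu$ weakly. For $W_p$ I would use that, since $X$ is compact, the cost $d^p$ is bounded and continuous, so $W_p$ is finite on all of $\mathcal{P}(X)$ and $W_p(\mu_n,\mu)\to0$ if and only if $\mu_n\to\mu$ weakly; here the usual description of Wasserstein convergence as weak convergence together with convergence of $p$-th moments collapses to plain weak convergence, the moment condition being automatic on a bounded space. Having shown that all three metrics share the same convergent sequences, I conclude they induce the same topology, the weak topology, which proves (i).

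For (ii) I would first establish compactness of $(\mathcal{P}(X),\text{weak})$: identifying $\mathcal{M}(X)$ with $C(X)^\ast$, the set $\mathcal{P}(X)$ is a weak-$\ast$ closed subset of the closed unit ball, which is weak-$\ast$ compact by Banach--Alaoglu; equivalently this is Prokhorov's theorem, using the tightness that compactness of $X$ grants for free. Since by (i) this topology is metrizable, $\mathcal{P}(X)$ is a compact metric space, hence complete and separable, i.e.\ Polish, and this conclusion transfers to each of the three metrics because they all metrize the same topology. The main obstacle is the Prokhorov/Wasserstein half of (i): neither admits a one-line verification like the weak-$\ast$ metric, and the Wasserstein case in particular rests on knowing that convergence of moments imposes no extra constraint here, a point that genuinely uses the boundedness of $X$.
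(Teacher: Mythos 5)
Your proposal is correct, but note that the paper itself gives no argument for this lemma: its entire ``proof'' is a citation to Aliprantis--Border, \emph{Infinite Dimensional Analysis}. So your proposal necessarily takes a different route, namely the standard textbook argument made explicit. The decomposition is sound: you handle the weak-$\ast$ series metric by hand (term-by-term convergence plus an $\varepsilon/3$ density argument, using $\big|\int g_i\,d\mu-\int g_i\,d\nu\big|\le 1$ to control the tail of the series), you reduce the Prokhorov and Wasserstein cases to the classical equivalences (Prokhorov's characterization on separable metric spaces; $W_p$-convergence equals weak convergence plus convergence of $p$-th moments, the moment condition being vacuous since $d$ is bounded on a compact $X$), and you obtain compactness from Banach--Alaoglu together with weak-$\ast$ closedness of $\mathcal{P}(X)$ in the unit ball, after which Polishness is automatic for a compact metrizable space. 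One point you should state explicitly rather than leave implicit: your step ``same convergent sequences implies same topology'' is valid only because every topology in play is metrizable, and for the weak topology this is precisely the classical metrizability of the weak-$\ast$ topology on bounded subsets of $C(X)^{\ast}$ when $C(X)$ is separable --- the same fact you lean on in part (ii) --- so it belongs at the outset of the proof, not as a hidden assumption. As for what each approach buys: the paper's bare citation is economical, and these are genuinely textbook facts; your sketch exposes the logical structure (which metric admits a direct verification, where the real content lies) at the cost of still citing the same classical theorems for the hard half, a limitation you candidly acknowledge.
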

\begin{proof}
See \cite{Infinite  dimensional Analysis}.
\end{proof}

\section{Basic topological properties of the map $\Phi$}

We start this section observing that $\Phi$ has a fixed point, since $T$ is continuous.
\begin{proposition}\label{fixed point for the push forward}
If T is a continuous map, then $\Phi$ has a fixed point.
\end{proposition}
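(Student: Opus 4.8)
The plan is to recognize that a fixed point of $\Phi$ is precisely a $T$-invariant probability measure, so the statement is the Krylov--Bogolyubov theorem in disguise, and to prove it by a fixed-point argument that exploits the structure already assembled. The three key ingredients are all available: by Lemma~\ref{lema1}(ii) the space $\mathcal{P}(X)$ endowed with the weak topology is a compact metric space (here we crucially use that $X$ is a compact Polish space, in contrast with the infinite discrete case); it is convex, being the set of all probability measures; and by the previous proposition the map $\Phi$ is continuous for the weak topology.

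With these facts in hand, I would invoke the Schauder--Tychonoff fixed point theorem. Viewing $\mathcal{P}(X)$ as a subset of the space of signed measures $\mathcal{M}(X)\cong C(X)'$ equipped with the weak-$\ast$ topology, which is a locally convex topological vector space, the set $\mathcal{P}(X)$ is a nonempty compact convex subset and $\Phi$ maps it continuously into itself. The theorem then produces $\mu\in\mathcal{P}(X)$ with $\Phi(\mu)=\mu$, which is the desired fixed point. This clarifies the remark made earlier in the discrete setting: there $\mathcal{P}(X)$ fails to be compact, so exactly this argument is unavailable.

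The one delicate point is the choice of topological setting. Schauder's theorem in its classical normed-space form does not apply directly, since the weak-$\ast$ topology on $\mathcal{M}(X)$ is not the one induced by the total variation norm; one must use the locally convex (Schauder--Tychonoff) version. To sidestep this, I would prefer a direct Krylov--Bogolyubov construction that uses only compactness and continuity. Fix any $\nu\in\mathcal{P}(X)$ and form the Cesàro averages
$$
 \mu_n=\frac{1}{n}\sum_{k=0}^{n-1}\Phi^k(\nu),
$$
which remain in the convex set $\mathcal{P}(X)$. By the compactness from Lemma~\ref{lema1}(ii), extract a weakly convergent subsequence $\mu_{n_j}\to\mu$. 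A telescoping identity gives
$$
 \Phi(\mu_n)-\mu_n=\frac{1}{n}\big(\Phi^n(\nu)-\nu\big),
$$
whose right-hand side tends to $0$ in the weak topology. Passing to the limit along the subsequence and using continuity of $\Phi$ yields $\Phi(\mu)=\mu$. This second route is self-contained and relies only on the compactness and continuity recorded above, so I expect it to be the cleaner presentation; the main obstacle in either approach is simply ensuring the correct topological framework in which compactness holds.
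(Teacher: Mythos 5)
Your proposal is correct, and it actually contains two arguments: the first is essentially the paper's proof, the second is a genuinely different route. The paper's proof is a two-line invocation of the Schauder fixed point theorem on the compact convex set $\mathcal{P}(X)$ with $\Phi$ continuous — exactly your first paragraph, except that you additionally flag (rightly) that the classical normed-space Schauder theorem is not what is being used here: $\mathcal{P}(X)$ is compact in the weak-$\ast$ topology, not in total variation, so one needs the Schauder--Tychonoff theorem for locally convex spaces; the paper glosses over this. Your second argument, the Krylov--Bogolyubov construction via Ces\`aro averages $\mu_n=\frac{1}{n}\sum_{k=0}^{n-1}\Phi^k(\nu)$, avoids fixed-point theorems altogether and uses only the compactness of $\mathcal{P}(X)$ (Lemma~\ref{lema1}), continuity of $\Phi$, and one property the Schauder route never needs: the telescoping identity $\Phi(\mu_n)-\mu_n=\frac{1}{n}\left(\Phi^n(\nu)-\nu\right)$ silently uses that $\Phi$ is \emph{affine}, i.e.\ $\Phi(\alpha\mu+\beta\nu)=\alpha\Phi(\mu)+\beta\Phi(\nu)$, which is immediate from $\Phi(\mu)=\mu\circ T^{-1}$ but should be stated explicitly. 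The trade-off is clear: the paper's (and your first) argument is shorter and applies to \emph{any} continuous self-map of $\mathcal{P}(X)$, affine or not, but rests on a nontrivial functional-analytic theorem whose correct version must be cited; your Ces\`aro argument is elementary and self-contained, but exploits the special linear structure of the push-forward map. Both are valid proofs of the proposition.
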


\begin{proof}
We notice that $\Phi$ is a continuous map and $\mathcal{P}(X)$ is a compact convex set.
By \textit{Schauder fixed point theorem} we have that $\Phi$ has a fixed point.
\end{proof}
\begin{remark}
 Proposition \ref{fixed point for the push forward}
implies that the  set of probability measures on $X$ which are $T$-invariant, denoted by $\mathcal{M}(T,X)$, is not empty.
\end{remark}

\begin{proposition}
Let $T:X\rightarrow X$ and $S:Y\rightarrow Y$ be topologically conjugated dynamical systems. Then
$\Phi:\mathcal{P}(X)\rightarrow\mathcal{P}(X)$ and $\Psi:\mathcal{P}(Y)\rightarrow\mathcal{P}(Y)$
are topologically
conjugated dynamical systems, where
$\Phi$ is induced by $T$ and $\Psi$ is induced by $S$.
\end{proposition}
\begin{proof}
Let $H: X\rightarrow Y$ be the conjugation between $T$ and $S$. Then we have
$$H\circ T= S\circ H.$$
Consider the map $\Sigma:\mathcal{P}(X)\rightarrow\mathcal{P}(Y)$, given by
$\Sigma(\mu)(A)=\mu(H^{-1}(A))$. Then  $\Sigma$ is a homeomorphism. Take $\nu\in\mathcal{P}(Y)$
and see that
\begin{align*}
\Sigma\circ\Phi\circ\Sigma^{-1}(\mu)&=\Sigma\circ\Phi(\mu\circ H)=\Sigma(\mu\circ H \circ T)
\\&=\mu\circ H \circ T\circ H^{-1}= \mu\circ S\circ H\circ H^{-1}
\\&=\mu\circ S=\Psi(\mu)
\end{align*}
Hence
$$\Sigma\circ\Phi\circ\Sigma^{-1}=\Psi,$$
which implies the result.
\end{proof}

Now we define a measurable partition of the set $X$ that we call grid.

\begin{lemma}\label{delta grading}
Given $X$ a compact metric space  and $\delta>0$, there exists a measurable covering of $X$, $\{P_{j}\}_{j=1}^{N}$,
such that each
$P_{j}$ has non-empty interior, $P_{i}\cap P_{j}=\emptyset$ for any $i \neq j$
and $d(x,y)<\delta$ for all $x,y\in P_{j}$, for all j.
Moreover, there exist $\varepsilon>0$ and points $p_i\in P_{i}$ such that $B_{\varepsilon}(p_i)\subset P_{i}$.
\end{lemma}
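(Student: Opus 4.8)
The plan is to build the partition from a \emph{maximal separated net} and then to cut $X$ into Voronoi-type cells around the net points; resolving ties by index turns the a priori overlapping cells into a genuine measurable partition, while the separation of the net forces each cell to contain a ball of fixed radius around its centre. Fix $\eta=\delta/3$. Since $X$ is a compact metric space it is totally bounded, so a maximal $\eta$-separated subset $\{p_1,\dots,p_N\}\subset X$ is finite; here maximality means both that $d(p_i,p_k)\ge\eta$ for $i\neq k$ and that the open balls $B_{\eta}(p_i)$ cover $X$ (otherwise a point outside every ball could be adjoined to the net, contradicting maximality).

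I would then define, for each $i$,
$$
P_i=\Big\{x\in X:\ d(x,p_i)\le d(x,p_k)\ \text{for all }k,\ \text{and }d(x,p_i)<d(x,p_k)\ \text{for all }k<i\Big\},
$$
so that $x$ is assigned to the net point nearest to it, ties being broken in favour of the smallest index. Each $P_i$ is a finite intersection of sets of the form $\{d(\cdot,p_i)\le d(\cdot,p_k)\}$ and $\{d(\cdot,p_i)<d(\cdot,p_k)\}$, hence Borel; by construction the $P_i$ are pairwise disjoint and their union is all of $X$, so $\{P_i\}_{i=1}^{N}$ is a measurable partition.

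It then remains to verify the quantitative requirements. For the diameter, if $x\in P_i$ then $p_i$ is a nearest net point to $x$, and since the $B_{\eta}(p_k)$ cover $X$ we have $x\in B_{\eta}(p_j)$ for some $j$, whence $d(x,p_i)\le d(x,p_j)<\eta$; thus $P_i\subset B_{\eta}(p_i)$ and $\operatorname{diam}P_i\le 2\eta<\delta$. For the ball inclusion I set $\varepsilon=\eta/2=\delta/6$ and take $x\in B_{\varepsilon}(p_i)$; then $d(x,p_i)<\eta/2$, while for $k\neq i$ the triangle inequality gives $d(x,p_k)\ge d(p_i,p_k)-d(x,p_i)>\eta-\eta/2=\eta/2>d(x,p_i)$, so $p_i$ is the \emph{unique} nearest net point and the tie-breaking rule is irrelevant. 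Hence $x\in P_i$, which shows $B_{\varepsilon}(p_i)\subset P_i$ and in particular that every cell has non-empty interior.

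I expect the only genuine obstacle to be the tension between the covering property and the ball-inclusion property: a cover by very small balls keeps the diameters below $\delta$ but leaves no room for an $\varepsilon$-ball inside each cell, whereas a coarse cover risks leaving gaps. Using a \emph{maximal} $\eta$-separated net resolves this, since the lower bound $d(p_i,p_k)\ge\eta$ controls the cross distances from below (yielding the interior ball) while maximality controls them from above (yielding the covering and hence the diameter bound); the Voronoi assignment is precisely what converts these two metric estimates into the desired partition.
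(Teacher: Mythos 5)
Your proof is correct, but it follows a genuinely different route from the paper's. The paper covers $X$ by finitely many balls $B_{\delta/2}(x_j)$ and takes successive differences $P_j=B_{\delta/2}(x_j)\setminus\bigl(B_{\delta/2}(x_1)\cup\dots\cup B_{\delta/2}(x_{j-1})\bigr)$; disjointness and the diameter bound are then immediate, but the final clause of the lemma --- non-empty interior and a uniform $\varepsilon$ with $B_{\varepsilon}(p_i)\subset P_i$ --- is simply asserted ``by construction''. Your construction (maximal $\delta/3$-separated net, Voronoi cells, ties broken by index) is different in substance, and it delivers exactly what the paper's argument leaves unproved: because distinct centres are at distance $\geq\eta=\delta/3$, every point of $B_{\eta/2}(p_i)$ is strictly closer to $p_i$ than to any other centre, so the inner-ball property holds with the explicit uniform radius $\varepsilon=\delta/6$, and non-empty interior follows. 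It is worth noting that with the paper's construction, applied to an arbitrary finite cover, this clause can actually fail: a piece can be empty (if its ball is swallowed by the earlier ones) or non-empty with empty interior --- e.g. $X=[0,2]$ with $\delta/2=1$ and centres $1,0,2$ in that order yields $P_2=\{0\}$ and $P_3=\{2\}$, which contain no ball at all. So the paper's proof needs a repair, essentially amounting to the separation of centres that you build in from the start, whereas your argument is complete as written.
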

\begin{proof}
Given $\delta>0$, there exist $x_1,...,x_k \in X$ such
that $\displaystyle X=\cup_{j=1}^k B_{\frac{\delta}{2}}(x_j)$. So we define
\begin{center}
$\displaystyle P_1= B_{\frac{\delta}{2}}(x_1)$,

$\displaystyle P_2= (B_{\frac{\delta}{2}}(x_2))-( B_{\frac{\delta}{2}}(x_1))$

$\vdots$

$\displaystyle P_k= (B_{\frac{\delta}{2}}(x_k))-(\cup_{j=1}^{k-1} B_{\frac{\delta}{2}}(x_j)).$
\end{center}
Then we get $X=\cup_{j=1}^k P_j$, and $P_i\cap P_j=\emptyset$, if $i\not= j$. As $ P_j\subset
B_{\frac{\delta}{2}}(x_j) $ , $d(x,y)<\delta$ for all $x,y\in P_j$.\\
As the covering  $\displaystyle X=\cup_{j=1}^k B_{\frac{\delta}{2}}(x_j)$ is finite
and by construction of each $P_i$, we can take a suitable $\varepsilon>0$ and choose points
$p_i\in P_i$ such that $B_{\varepsilon}(p_i)\subset P_{i}$ for $i\in \{1,...,k\}$.
\end{proof}

 With this grid in mind we can approximate any measure as follows:
\begin{lemma}
Given $\mu \in \cP(X)$  and $\varepsilon > 0$, there exists
$$
   \nu = \sum_{i=1}^N  a_i \delta_{p_i}
$$
such that $d(\mu, \nu) < \varepsilon$.
\end{lemma}
\begin{proof}
 Given $\varepsilon>0$,
 there exists $n_0\in\mathbb{N}$,
 such that
 $$
 \sum_{i=n_0+1}^{\infty}\frac{1}{2^{i}}<\displaystyle\frac{\varepsilon}{2}.
 $$
 Using the continuity of $g_i$,
 there exists $\delta=\delta\Big(n_0,\displaystyle\frac{\varepsilon}{2}\Big)$,
  such that
$$
 d(x,y)<\delta\Rightarrow |g_i(x)-g_i(y)|
 <\displaystyle\frac{\varepsilon}{2}, \ \forall i\in\{1,...,n_0\}.
$$
  Given $\de > 0$, let us consider a grading
 $P=\{P_1, \ldots, P_N\}$ such that
 $\mbox{diam}(P_i)<\delta$ for all $ P_i$.
 Take points $p_i \in P_i$ and consider
 the probability
 $$
   \nu= \sum_{i=1}^N  \mu(P_i) \de_{p_i}.
 $$
 Then we notice that
 \begin{align*}
 d(\nu,\mu)&=\sum_{j=1}^{\infty}\frac{1}{2^j}\Big|\int_X g_j(x)d\nu-\int_X g_j(x)d\mu\Big|
 \\&=\sum_{j=1}^{\infty}\frac{1}{2^j}\Big|\sum_{i=1}^{k}\int_{P_i} g_j(p_i) - g_j(x)d\mu\Big|
 \\&\leq\sum_{j=1}^{\infty}\frac{1}{2^j}\sum_{i=1}^{k}\int_{P_i}\Big|g_j(p_i)-g_j(x)\Big|d\mu
  <\varepsilon
 \end{align*}

\end{proof}

For the next we assume that the homeomorphism $T$ is such that its periodic points
are dense in $X$, i.e.: given $\delta >0$, there exists
a $K-$periodic point $p\in X$ such that its orbit $\{p, T(p),..., T^{K-1}(p)\}$ is  $\delta-$dense.
We can also define periodic measures,
say, measures that are periodic points of the dynamics $\Phi$.

\begin{proposition}
If  $T:X\rightarrow X$ is a homeomorphism with dense periodic points, then the periodic points for $\Phi$ are dense in $\mathcal{P}(X)$.
\end{proposition}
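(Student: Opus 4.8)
The plan is to reduce the statement to the approximation lemma just proved, using the single observation that every probability measure supported on a periodic orbit of $T$ is a periodic point of $\Phi$. First I would record that for any $x\in X$ one has $\Phi(\delta_x)=\delta_{T(x)}$, which is immediate from the definition $\Phi(\mu)(A)=\mu(T^{-1}(A))$. Consequently, if $p$ is a $K$-periodic point of $T$ and $\nu=\sum_{j=0}^{K-1}a_j\delta_{T^j(p)}$ is any atomic measure carried by the orbit $\{p,T(p),\dots,T^{K-1}(p)\}$, then $\Phi$ merely shifts the weights cyclically, $\Phi(\nu)=\sum_{j=0}^{K-1}a_{j-1}\delta_{T^j(p)}$ (indices mod $K$), so $\Phi^K(\nu)=\nu$. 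Thus every such $\nu$ is a periodic point of $\Phi$, no matter how the weights $a_j$ are chosen, and in particular no fixed-point argument is needed to produce them.

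Next I would fix $\mu\in\cP(X)$ and $\varepsilon>0$ and coordinate the parameters exactly as in the previous lemma: choose $n_0$ with $\sum_{j>n_0}2^{-j}<\varepsilon/2$, and then $\delta>0$ so that $d(x,y)<\delta$ implies $|g_j(x)-g_j(y)|<\varepsilon/2$ for all $j\le n_0$. Using the hypothesis of dense periodic points I pick a $K$-periodic point $p$ whose orbit $\{T^j(p)\}_{j=0}^{K-1}$ is $\delta$-dense in $X$. I then partition $X$ into measurable cells $\{Q_j\}_{j=0}^{K-1}$ by assigning each $x$ to a cell $Q_j$ for which $T^j(p)$ is a nearest orbit point, breaking ties by index; by $\delta$-density every $x\in Q_j$ satisfies $d(x,T^j(p))<\delta$. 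Setting $\nu:=\sum_{j=0}^{K-1}\mu(Q_j)\,\delta_{T^j(p)}$, the measure $\nu$ is supported on the periodic orbit and hence is a periodic point of $\Phi$ by the first paragraph.

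It remains to estimate $d(\mu,\nu)$, and here the computation is the one already carried out for the approximation lemma: writing $\int g_j\,d\nu-\int g_j\,d\mu=\sum_i\int_{Q_i}\bigl(g_j(T^i(p))-g_j(x)\bigr)\,d\mu$ and splitting the outer sum at $n_0$, the tail contributes less than $\varepsilon/2$ because each $g_j$ takes values in $[0,1]$, while the head is controlled by uniform continuity on each cell $Q_i$, giving $d(\mu,\nu)<\varepsilon$. Since $\mu$ and $\varepsilon$ were arbitrary, the periodic points of $\Phi$ are dense in $\cP(X)$. The conceptual crux is cheap, namely the remark that $\Phi$ acts on orbit-supported measures as a cyclic permutation of the weights; the only step demanding care is the ordering of the parameters, since the cells $Q_j$ are not a free fine grid but are forced to be the nearest-point regions of one $\delta$-dense orbit, so $\delta$ must be fixed after $n_0$ and small enough for the uniform-continuity bound before density of periodic orbits is invoked.
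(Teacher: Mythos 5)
Your proof is correct, and the overall strategy is the same as the paper's: fix $n_0$ and $\delta$ for the weak-$\ast$ metric exactly as in the approximation lemma, take a $\delta$-dense periodic orbit, push the mass of $\mu$ onto the orbit cell by cell, and run the identical tail/head estimate. The genuine difference is how the cells are produced, and your choice is actually more robust. The paper first builds a $\delta$-grid $\{P_1,\dots,P_K\}$ independently of the orbit and then asserts that a $\frac{\delta}{2}$-dense orbit ``clearly'' meets every grid cell, choosing $q_i\in P_i$; as stated this is not justified, because the cells of Lemma \ref{delta grading} are only guaranteed to contain some ball of radius $\varepsilon$ which may be far smaller than $\frac{\delta}{2}$ (later cells are set differences and can be thin slivers), so a $\frac{\delta}{2}$-dense orbit can miss a cell --- one would need the orbit to be $\varepsilon$-dense for the inradius $\varepsilon$ of the grid. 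Your nearest-point cells $Q_j$ are generated by the orbit itself, so the only property the estimate needs --- that every $x\in Q_j$ satisfies $d(x,T^j(p))<\delta$ --- holds by construction, and no claim about hitting prescribed cells is required. You also state and verify explicitly the fact the paper uses silently: $\Phi$ acts on a measure supported on a $K$-periodic orbit by cyclically permuting the weights, so $\Phi^K(\nu)=\nu$ for any choice of weights. With those two points made precise, your argument closes the one soft spot in the paper's proof while keeping its structure and its final computation unchanged.
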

\begin{proof}
 Given any measure $\mu\in \mathcal{P}(X)$, we need to show how it can be approximated by a periodic measure.
Take $\varepsilon>0$, then there exists $n_0$ such that
$$\sum_{i=n_0+1}^{\infty}\frac{1}{2^{i}}<\displaystyle\frac{\varepsilon}{2}.$$
Using the continuity of $g_i$,
there exists $\delta=\delta\Big(n_0,\displaystyle\frac{\varepsilon}{2}\Big)$, such that
$$d(x,y)<\delta\Rightarrow |g_i(x),g_i(y)|<\displaystyle\frac{\varepsilon}{2}, \ \forall i\in\{1,...,n_0\}.$$
 We consider a $\delta-$grid on X,  $P=\{P_1,...,P_K\}$,
 and take a periodic orbit in $X$, $\{p, T(p),..., T^{N-1}(p)\}$,
 which is $\displaystyle\frac{\delta}{2}-$dense.
Clearly, there exists at least one point of the orbit in each element $P_i$ (and so
$K \leq N$). Let us relabel the orbit as follows: call $q_1$ a point lying in $P_1$ (any one of
the finite points in this set can be chosen); $q_i$ some point lying in $P_i$ and so on,
until $q_N\in  P_N$. So we define the measure
$$\mu^{\prime}=\sum_{i=1}^N \mu(P_i)\delta_{q_i}.$$
Then we have that
\begin{align*}
d(\mu,\mu^{\prime})&=\sum_{i=1}^{\infty}\frac{1}{2^i}\Big|\int_Xg_i(x)d\mu-\int_Xg_i(x)d\mu^{\prime}\Big|
\\&=\sum_{i=1}^{\infty}\frac{1}{2^i}\Big|\sum_{j=1}^{K}\int_{P_j}(g_i(x)-g_{i}(q_j))d\mu\Big|
\\&\leq\sum_{i=1}^{\infty}\frac{1}{2^i}\sum_{j=1}^{K}\int_{P_j}|g_i(x)-g_{i}(q_j) |d\mu
\\&=\sum_{i=1}^{n_0}\frac{1}{2^i}\sum_{j=1}^{K}\int_{P_j}|g_i(x)-g_{i}(q_j) |d\mu
+\sum_{i=n_0+1}^{\infty}\frac{1}{2^i}\sum_{j=1}^{K}\int_{P_j}|g_i(x)-g_{i}(q_j) |d\mu
\\&\leq\sum_{i=1}^{n_0}\frac{1}{2^i}\sum_{j=1}^{K}\frac{\varepsilon}{2}\mu(P_j)+
\sum_{i=n_0+1}^{\infty}\frac{1}{2^i}\sum_{j=1}^{K}2\mu(P_j)<\varepsilon,
\end{align*}
where the last inequality comes from the fact $\mu(X)=\sum_{j=1}^{K}\mu(P_j)=1$
\end{proof}

\begin{definition}
Let $T:X\rightarrow X$ a homeomorphism of a compact metric space.
We say that $T$ is equicontinuous if the sequence of iterates of $T$, $\{T^n\}_{n\in \N}$, is an
equicontinuous sequence of homeomorphisms.
\end{definition}

\begin{proposition}
If T is equicontinuous, then $\Phi$ is equicontinuous.
\end{proposition}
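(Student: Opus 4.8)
The plan is to work with the Wasserstein metric $W_p$, where the push-forward interacts with transport plans in a transparent way, and to reduce to this choice of metric at the outset. First I would record that by Lemma \ref{lema1} the space $\mathcal{P}(X)$ is compact and the three metrics all induce the weak topology. On a compact metric space any two metrics inducing the same topology are uniformly equivalent: the identity is a continuous bijection between the two compact metric spaces in either direction, and continuous maps on compact metric spaces are uniformly continuous. A routine argument then shows that equicontinuity of the family $\{\Phi^n\}_{n\in\N}$ is insensitive to which of the metrics of Lemma \ref{lema1} we use, so it suffices to establish it for $W_p$.

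The key observation is that push-forward transports couplings. If $\Pi$ is a transport from $\mu$ to $\nu$, then $\Pi_n:=(T^n\times T^n)_\ast\Pi$ is a transport from $\Phi^n\mu=\mu\circ T^{-n}$ to $\Phi^n\nu=\nu\circ T^{-n}$ (its marginals are computed directly from $\Pi_n(B\times X)=\Pi(T^{-n}(B)\times X)=\mu(T^{-n}(B))$, and similarly for the second marginal). Using $\Pi_n$ as a competitor in the definition of $W_p(\Phi^n\mu,\Phi^n\nu)$ together with the change-of-variables identity $\int g\,d((T^n\times T^n)_\ast\Pi)=\int g\circ(T^n\times T^n)\,d\Pi$ gives
\begin{equation*}
W_p(\Phi^n\mu,\Phi^n\nu)^p \le \int_{X\times X} d^p(x',y')\,d\Pi_n(x',y') = \int_{X\times X} d^p(T^nx,T^ny)\,d\Pi(x,y).
\end{equation*}

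Now I would invoke equicontinuity of $\{T^n\}$: given $\varepsilon>0$ there is $\delta>0$ with $d(x,y)<\delta\Rightarrow d(T^nx,T^ny)<\varepsilon$ for every $n$. Taking $\Pi$ to be an optimal plan for $W_p(\mu,\nu)$ and splitting the last integral over $\{d(x,y)<\delta\}$ and its complement, I bound the first piece by $\varepsilon^p$, and, using compactness of $X$ (so $\operatorname{diam}(X)<\infty$) and Chebyshev's inequality $\Pi(\{d\ge\delta\})\le \delta^{-p}\int d^p\,d\Pi=\delta^{-p}W_p(\mu,\nu)^p$, I bound the second piece by $(\operatorname{diam}(X))^p\,\delta^{-p}W_p(\mu,\nu)^p$. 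This yields
\begin{equation*}
W_p(\Phi^n\mu,\Phi^n\nu)^p \le \varepsilon^p + (\operatorname{diam}(X))^p\,\delta^{-p}\,W_p(\mu,\nu)^p,
\end{equation*}
with a bound that does not depend on $n$. To produce the single modulus required for equicontinuity, given a target tolerance I first choose $\varepsilon$ so that $\varepsilon^p$ is half of it, obtain the associated $\delta$, and then require $W_p(\mu,\nu)$ small enough that the second term is below the remaining half.

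The main obstacle is making the estimate uniform in $n$: continuity of each individual $\Phi^n$ is automatic, so the entire content of the proposition is this uniformity, and it is supplied precisely by equicontinuity of $\{T^n\}$ funneled through the coupling inequality above. The two points demanding care are the reduction to $W_p$ via uniform equivalence of the metrics on the compact space $\mathcal{P}(X)$, and the control of the tail $\{d(x,y)\ge\delta\}$, which relies on finiteness of $\operatorname{diam}(X)$.
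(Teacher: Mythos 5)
Your proof is correct, but it takes a genuinely different route from the paper's. The paper works directly with the Prokhorov metric: from equicontinuity of $\{T^n\}$ it extracts $\delta$ with $d(x,y)<\delta\Rightarrow d(T^n x,T^n y)<\varepsilon$ for all $n$, proves the set inclusion $(T^{-n}(A))_{\delta}\subset T^{-n}(A_{\varepsilon})$, and feeds this straight into the definition of $d_P$ to conclude $d_P(\mu,\nu)<\delta\Rightarrow d_P(\Phi^n\mu,\Phi^n\nu)<\varepsilon$ for all $n$; the same $\delta$ serves verbatim as the modulus for $\Phi$, with no splitting, no optimal plans, and no change of metric. You instead run the argument through the Wasserstein metric, using the same coupling device the paper uses to prove Proposition \ref{lipschitz} (namely that $(T^n\times T^n)_{\ast}\Pi$ is a transport from $\Phi^n\mu$ to $\Phi^n\nu$), and then control the cost by splitting at $\{d\geq\delta\}$ via Markov/Chebyshev together with finiteness of $\operatorname{diam}(X)$. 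This costs you two extra ingredients the paper's proof does not need: the preliminary reduction (uniform equivalence, on the compact space $\mathcal{P}(X)$, of all metrics inducing the weak topology, which is correct by Heine--Cantor and which incidentally is what makes the proposition metric-independent, a point the paper leaves implicit), and the existence of an optimal plan, though an $\eta$-near-optimal coupling would serve equally well and would let you drop that appeal. What your route buys in exchange is a quantitative, $n$-uniform estimate $W_p(\Phi^n\mu,\Phi^n\nu)^p\leq\varepsilon^p+\operatorname{diam}(X)^p\,\delta^{-p}\,W_p(\mu,\nu)^p$ and methodological unity with the paper's own Lipschitz argument, whereas the paper's Prokhorov argument is shorter and purely set-theoretic.
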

\begin{proof}
 Let us suppose $T$ equicontinuous and consider the sequence $\{\Phi^n\}_{n\in \N}$. Take $\varepsilon >0$,
then there exists $\delta>0$ such that
$$
 d(x,y)<\delta\Rightarrow d(T^n(x),T^n(y))<\varepsilon.
$$
By considering the
 Prokhorov metric we have
 $$
  d_P(\mu,\nu)<\delta\Rightarrow d_P(\Phi^n(\mu),\Phi^n(\nu))<\varepsilon, \forall n\in\N.
 $$
 To see that we suppose $d_P(\mu,\nu)<\delta$ and observe that
$$
 (T^{-n}(A))_{\delta}\subset T^{-n}(A_{\varepsilon}),
$$
where $A_{\gamma}=\{x\in X: d(x,A)<\gamma\}$, for some $A\subset X$.
In fact, if $x\in (T^{-n}(A))_{\delta}$, then there exists $z\in T^{-n}(A)$ such that
$d(x,z)<\delta$, but it implies $d(T^n(x),T^n(z))<\varepsilon$.
As $z\in T^{-n}(A)$, $T^n(z)\in A$,  so $T^n(x)\in A_{\varepsilon}$ and it implies
$x\in T^{-n}(A_{\varepsilon})$. Then we have that
\begin{align*}
\Phi^n(\mu)(A)=\mu(T^{-n}(A))\leq \nu((T^{-n}(A))_{\delta})+\delta\leq\nu( T^{-n}(A_{\varepsilon}))+\varepsilon
=\Phi^n(\nu)(A_{\varepsilon})+\varepsilon
\\
\Phi^n(\nu)(A)=\nu(T^{-n}(A))\leq \mu((T^{-n}(A))_{\delta})+\delta\leq\mu( T^{-n}(A_{\varepsilon}))+\varepsilon
=\Phi^n(\mu)(A_{\varepsilon})+\varepsilon,
\end{align*}
 and it implies  $d_P(\Phi^n(\mu),\Phi^n(\nu))<\varepsilon$.
\end{proof}

We also can prove that $T$ Lipschitz implies $\Phi$ Lipschitz. In order to prove that result we need the following:

\begin{lemma}\label{Change of variables}
(Change of variables)
 Let $f:X\rightarrow \R$ be a measurable function and $T:X\rightarrow X$ continuous. Then
 $$\int_X f d(\Phi(\mu))=\int_X (f\circ T)(x)d\mu.$$
\end{lemma}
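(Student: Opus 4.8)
The plan is to prove the identity by the standard measure-theoretic bootstrapping procedure, extending the formula successively from indicator functions to simple, nonnegative, and finally general measurable functions. Throughout I use that $\Phi(\mu)$ is by definition the measure determined by $\Phi(\mu)(A)=\mu(T^{-1}(A))$, which is well defined because continuity of $T$ makes $T^{-1}(A)$ measurable for every Borel set $A$.

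First I would establish the base case $f=\chi_A$ for a measurable set $A$. Since $\chi_A\circ T=\chi_{T^{-1}(A)}$, we have $\int_X \chi_A\, d\Phi(\mu)=\Phi(\mu)(A)=\mu(T^{-1}(A))=\int_X \chi_{T^{-1}(A)}\, d\mu=\int_X(\chi_A\circ T)\, d\mu$, which is exactly the claimed equality for indicators. By linearity of both integrals this extends immediately to every nonnegative simple function $s=\sum_{i=1}^m c_i\chi_{A_i}$, using that $s\circ T=\sum_{i=1}^m c_i\chi_{T^{-1}(A_i)}$.

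Next I would pass to an arbitrary nonnegative measurable $f$. Choosing an increasing sequence of nonnegative simple functions $s_n\uparrow f$ pointwise, the composed functions satisfy $s_n\circ T\uparrow f\circ T$ pointwise as well. Applying the Monotone Convergence Theorem on both sides, together with the equality already established for each $s_n$, yields $\int_X f\, d\Phi(\mu)=\lim_n\int_X s_n\, d\Phi(\mu)=\lim_n\int_X(s_n\circ T)\, d\mu=\int_X(f\circ T)\, d\mu$. Finally, for a general measurable $f$ I would decompose $f=f^{+}-f^{-}$ and apply the nonnegative case to each part, noting that $(f\circ T)^{\pm}=f^{\pm}\circ T$; subtracting the two identities gives the result whenever the integrals are finite.

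The argument is essentially routine, and the only point demanding a moment of care is the measurability step: one must check that $T^{-1}(A)$ is measurable so that $\Phi(\mu)$ is a genuine measure, and that $f\circ T$ is measurable whenever $f$ is. Both follow at once from the continuity, hence Borel measurability, of $T$, so I do not expect any substantial obstacle.
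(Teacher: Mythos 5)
Your proof is correct, but it takes a different route from the paper in a trivial sense: the paper offers no argument at all for this lemma, simply citing Aliprantis--Border (\emph{Infinite Dimensional Analysis}) where the change-of-variables formula for push-forward measures is a standard result. Your write-up supplies what that citation hides: the standard bootstrapping machine, starting from indicators (where the identity $\chi_A\circ T=\chi_{T^{-1}(A)}$ makes the formula a restatement of the definition $\Phi(\mu)(A)=\mu(T^{-1}(A))$), extending by linearity to simple functions, by monotone convergence to nonnegative measurable functions, and by the decomposition $f=f^{+}-f^{-}$ to the general case. Each step is sound, and you correctly flag the two points that need checking --- that $\Phi(\mu)$ is a genuine Borel measure and that $f\circ T$ is measurable, both consequences of the Borel measurability of the continuous map $T$ --- as well as the implicit integrability caveat in the final signed case. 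What your approach buys is a self-contained proof at the cost of half a page; what the paper's citation buys is brevity, at the cost of sending the reader to a reference. One could even add that your argument nowhere uses continuity of $T$ beyond Borel measurability, so the lemma holds for any measurable $T$, a mild generality the paper does not record.
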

\begin{proof}
See \cite{Infinite  dimensional Analysis}.
\end{proof}

\begin{proposition}\label{lipschitz}
If $T:X\rightarrow X$ is C-Lipschitz, then $\Phi:\mathcal{P}(X)\rightarrow\mathcal{P}(X)$ is C-Lipschitz
with respect to the Wasserstein metric. If we consider the Prokhorov metric or the weak-$\ast$ metric $\Phi$ is Lipschitz, but C can change.
\end{proposition}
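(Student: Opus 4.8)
The plan is to handle the three metrics separately, with the Wasserstein case carrying the essential idea (and giving the sharp constant $C$) and the other two being variants of the same principle of transporting the comparison through $T$. For the Wasserstein metric the natural move is to push forward couplings. Given $\mu,\nu\in\mathcal{P}(X)$, I would take an optimal transport plan $\Pi$ realizing $W_p(\mu,\nu)$ (it exists since $X$ is compact Polish, by Lemma~\ref{lema1}), consider the product map $T\times T\colon X\times X\to X\times X$, $(x,y)\mapsto(T(x),T(y))$, and set $\tilde\Pi:=(T\times T)_{\ast}\Pi$. A direct marginal check shows $\tilde\Pi$ is a coupling of $\Phi(\mu)$ and $\Phi(\nu)$: its first marginal sends a Borel set $A$ to $\Pi(T^{-1}(A)\times X)=\mu(T^{-1}(A))=\Phi(\mu)(A)$, and symmetrically for the second. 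Hence $\tilde\Pi$ is admissible and, using the change of variables (Lemma~\ref{Change of variables} applied on $X\times X$) together with the hypothesis $d(Tx,Ty)\le C\,d(x,y)$,
\[
W_p(\Phi\mu,\Phi\nu)^p \le \int_{X\times X} d^p(x,y)\,d\tilde\Pi = \int_{X\times X} d^p(T(x),T(y))\,d\Pi \le C^p\int_{X\times X} d^p(x,y)\,d\Pi = C^p\,W_p(\mu,\nu)^p .
\]
Taking $p$-th roots yields exactly $W_p(\Phi\mu,\Phi\nu)\le C\,W_p(\mu,\nu)$, so the constant is preserved.

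For the Prokhorov metric I would reuse the neighborhood inclusion from the equicontinuity argument. The key geometric fact is that for every Borel $A$ one has $(T^{-1}(A))_{\alpha}\subseteq T^{-1}(A_{C\alpha})$: if $d(x,T^{-1}(A))<\alpha$ then some $z$ with $Tz\in A$ satisfies $d(x,z)<\alpha$, whence $d(Tx,A)\le d(Tx,Tz)\le C\,d(x,z)<C\alpha$. Now if $d_P(\mu,\nu)<\alpha$, then for every $A$,
\[
\Phi(\mu)(A)=\mu(T^{-1}(A)) \le \nu\big((T^{-1}(A))_{\alpha}\big)+\alpha \le \nu\big(T^{-1}(A_{C\alpha})\big)+\alpha = \Phi(\nu)(A_{C\alpha})+\alpha ,
\]
and symmetrically with $\mu$ and $\nu$ interchanged. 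Setting $\beta=\max\{C,1\}\,\alpha$ gives $A_{C\alpha}\subseteq A_{\beta}$ and $\alpha\le\beta$, so both Prokhorov inequalities hold at level $\beta$; letting $\alpha\downarrow d_P(\mu,\nu)$ yields $d_P(\Phi\mu,\Phi\nu)\le\max\{C,1\}\,d_P(\mu,\nu)$. This is Lipschitz with constant $\max\{C,1\}$, which is precisely why $C$ may change.

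For the weak-$\ast$ metric I would begin from the change of variables $\int g_i\,d\Phi(\mu)=\int g_i\circ T\,d\mu$, giving
\[
d(\Phi\mu,\Phi\nu)=\sum_{i=1}^{\infty}\frac{1}{2^{i}}\Big|\int_X g_i\circ T\,d\mu-\int_X g_i\circ T\,d\nu\Big| .
\]
Here lies the main obstacle: each pulled-back test function $g_i\circ T$ is again continuous with values in $[0,1]$, but it need not belong to the fixed dense family $\{g_j\}$, so the sum cannot be compared to $d(\mu,\nu)$ term by term. I would address this by choosing the dense sequence $\{g_i\}$ to consist of Lipschitz functions (permissible on compact $X$), so that $g_i\circ T$ is Lipschitz with constant $C\cdot\mathrm{Lip}(g_i)$ and the weak-$\ast$ modulus is controlled through the dual-Lipschitz estimate; the resulting constant is no longer $C$, consistent with the statement. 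I expect this weak-$\ast$ case to be the genuinely delicate point, since mere topological equivalence of the three metrics (Lemma~\ref{lema1}) guarantees only uniform continuity, not a bi-Lipschitz comparison, so the Lipschitz bound must be extracted from the explicit structure of the functionals rather than transferred for free from the Prokhorov or Wasserstein cases.
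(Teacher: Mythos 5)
On the Wasserstein clause your argument is the paper's own: push a transport plan forward under $(T,T)$, check that the marginals are $\Phi(\mu)$ and $\Phi(\nu)$, apply the change of variables (Lemma~\ref{Change of variables}) and the bound $d(Tx,Ty)\le C\,d(x,y)$. The only difference is that you select an optimal plan while the paper takes the infimum over all pushed-forward plans (the optimal plan is not actually needed); your bookkeeping is in fact the more accurate one, since the paper's display writes a factor $C$ where $C^p$ is meant, both giving constant $C$ after $p$-th roots.

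On the Prokhorov clause you genuinely diverge from the paper, and to your advantage. The paper disposes of both remaining metrics with the single sentence ``since the Prokhorov metric and the weak-$\ast$ are equivalents to the Wasserstein metric, we get the result,'' and that is not a valid argument: the equivalence of Lemma~\ref{lema1} is topological, and a topological equivalence transports continuity, never Lipschitz constants. Quantitatively one only has, on a compact space, a comparison of the form $d_P^2\le W_1\le(1+\mathrm{diam}\,X)\,d_P$, which converts a Lipschitz bound in $W_1$ into a H\"older-$\tfrac12$ bound in $d_P$, not a Lipschitz one. Your direct argument via the inclusion $(T^{-1}(A))_{\alpha}\subseteq T^{-1}(A_{C\alpha})$ is correct, gives the explicit constant $\max\{C,1\}$, and is the same device the paper itself uses later in its equicontinuity proposition; so on this clause your proof repairs an actual hole in the paper.

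On the weak-$\ast$ clause your proposal has a gap, which you correctly flag, and it cannot be closed along the line you sketch. Kantorovich--Rubinstein duality applied to Lipschitz $g_i$ gives $|\int g_i\circ T\,d(\mu-\nu)|\le C\,\mathrm{Lip}(g_i)\,W_1(\mu,\nu)$, hence at best $d(\Phi\mu,\Phi\nu)\le K\,W_1(\mu,\nu)$; to conclude you would still need $W_1(\mu,\nu)\le K'\,d(\mu,\nu)$, and no such inequality holds---the weak-$\ast$ metric is not bounded below by $W_1$, only topologically equivalent to it. Worse, for an arbitrary dense family $\{g_i\}$ the clause is false. Take $T(x)=x/2$ on $X=[0,1]$; enumerate the family so that the odd-indexed functions form a dense sequence and every even-indexed function vanishes on $[\tfrac12,1]$. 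At stage $n$ choose $2n+2$ points $w_j\in(\tfrac12,1)$ and a nonzero signed combination $\lambda_n=\mu_n-\nu_n$ of Dirac masses there, annihilating the constant $1$ and the first $2n$ odd-indexed functions ($2n+1$ linear conditions in $2n+2$ unknowns), normalized to $\|\lambda_n\|_{TV}=2$; then define the even-indexed function $g_{2n}$ to equal $1$ at the image points $w_j/2$ carrying positive mass and $0$ at the others. Every term of $d(\mu_n,\nu_n)$ with index $\le 4n$ vanishes, so $d(\mu_n,\nu_n)\le 2^{-4n+1}$, while $d(\Phi\mu_n,\Phi\nu_n)\ge 2^{-2n}\,|\int g_{2n}\circ T\,d\lambda_n|=2^{-2n}$; the ratio $2^{2n-1}$ is unbounded even though $T$ is a contraction. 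So the weak-$\ast$ statement can hold only for particular choices of the family $\{g_i\}$; your instinct that this is the delicate point is right, but note the paper's own proof of this clause rests on exactly the same invalid ``equivalence'' transfer, so the repair must modify the statement (fix the family, or drop the weak-$\ast$ claim), not merely the proof.
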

\begin{proof}
 Let us consider the map $(T,T): X\times X\rightarrow X\times X$ defined by
$(T,T)(x,y)=(T(x),T(y))$. We have that $(T,T)$ is continuous, so $(T,T)$ induces a continuos
map on $\mathcal{P}(X\times X)$, let us say $\Psi$.
Hence if $\Pi$ is a measure on $X\times X$ we have, by the Lemma \ref{Change of variables}
 $$\int_{X\times X}d^p(x,y)d(\Psi(\Pi))=\int_{X\times X}d^p(T(x),T(y))d\Pi.$$
  We observe that if $\mu,\nu\in\mathcal{P}(X)$ and $\Pi$ is a transport from $\mu$
to $\nu$ then $\Psi(\Pi)$ is a transport from $\Phi(\mu)$ to $\Phi(\nu)$. Then, if $T$ is
a $C$-Lipschitz function we have

\begin{align*}
W_{p}^p(\Phi(\mu),\Phi(\nu))&=
\inf_{\Pi^{\prime} }\Big\{\int_{X\times X}d^p(x,y)d\Pi^{\prime}:\Pi^{\prime}
 \ \mbox{is a transport from } \ \Phi(\mu) \ \mbox{to}\ \Phi(\nu)\Big\}
\\&\leq
\inf_{\Pi}\Big\{\int_{X\times X}d^p(x,y)d(\Psi(\Pi)): \Pi  \mbox{ is a transport from }  \mu \ \mbox{to}\ \nu\Big\}
\\&=\inf_{\Pi}\Big\{\int_{X\times X}d^p(T(x),T(y))d(\Pi):\Pi  \mbox{ is a transport from }  \mu \ \mbox{to}\ \nu\Big\}
\\&\leq C\inf_{\Pi}\Big\{\int_{X\times X}d^p(x,y)d(\Pi):\Pi  \mbox{ is a transport from }  \mu \ \mbox{to}\ \nu\Big\}
\\&=C W_{p}^p(\mu,\nu).
\end{align*}
 Since the Prokhorov metric and the weak-$\ast$ are equivalents to the Wasserstein metric, we get the result.
\end{proof}


Another natural question is whether transitivity of $T$ implies transitivity
of $\Phi$. The example below shows that the answer is {\em negative}.

\begin{remark}\label{ not transitive}
T transitive does not imply $\Phi$ transitive.
\end{remark}
\begin{proof}
If $T:\mathbb{S}^1\rightarrow\mathbb{S}^1$ is the irrational rotation
on the circle given by $T(x)=x+\alpha$, $\alpha$  an irrational number,
we have that $T$ is transitive. As $T$ is a translation, we have that
$\Phi$ is 1-Lipschitz, if we consider on $\mathcal{P}(X)$ the Prokhorov distance.
  If we assume $\Phi$ transitive we have that there exists $\mu\in\mathcal{P}(X)$
  such that the forward orbit $\{\Phi^n(\mu):n\in\mathbb{N}\}$ is dense in $\mathcal{P}(X)$.
  Take $\varepsilon>0$ such that $0\not\in\displaystyle A=\Big(\varepsilon,1-\varepsilon\Big)$ and
  $1-2\varepsilon>\varepsilon$ (what corresponds to a choice of $\vep \in (0, 1/3)$).
  Consider the Lebesgue measure $\lambda\in\mathcal{P}(X)$,
  there exists $n\in\mathbb{N}$, such that $\displaystyle d_P(\Phi^{n}(\mu),\lambda)<\frac{\varepsilon}{4}.$
    Take  $p=0\in\mathbb{S}^1$. By the density of the sequence $\{\Phi^{k}(\mu)\}_{k\in\mathbb{N}}$,
  there exists $l\in \mathbb{N}$, such that
  $\displaystyle d_P(\Phi^{n+l}(\mu),\delta_0)<\frac{\varepsilon}{4}.$
  As $\Phi$ is 1-Lipschitz and $\lambda$ is $\Phi$-invariant,
 we have that
 $$\displaystyle d_P(\Phi^{n+l}(\mu), \lambda)=d_P( \Phi^{n+l}(\mu), \Phi^{l}(\lambda))
 \leq d_P(\Phi^{n}(\mu),\lambda) < \frac{\varepsilon}{4}.$$
 By triangular inequality we get the following
 $$d_P(\lambda,\delta_0)\leq \displaystyle d_P(\Phi^{n+l}(\mu), \lambda)+ \displaystyle d_P(\Phi^{n+l}(\mu), \delta_0)\leq \frac{\varepsilon}{2}.$$
It implies that
$$\displaystyle\lambda (A)\leq \delta_0(A_{\frac{\varepsilon}{2}})+\frac{\varepsilon}{2},
  \mbox{ and }
  \delta_0(A)\leq \lambda(A_{\frac{\varepsilon}{2}})
  +\frac{\varepsilon}{2}, \ \forall A\in\mathcal{B}(\mathbb{S}^1).$$
  In particular, if $A=\Big(\varepsilon,1-\varepsilon\Big)$, $\displaystyle0\not\in A_{\frac{\varepsilon}{2}}$. Then
  $$1-2\varepsilon=\displaystyle\lambda (A)\leq \delta_0(A_{\frac{\varepsilon}{2}})+\frac{\varepsilon}{2}=\frac{\varepsilon}{2},$$
  which is a contradiction.
 \end{proof}

  We assume now an stronger condition, say,
  that $T$ is topologically mixing, i.e., given $U,V$ open sets in $X$, there exists
$N\in\mathbb{N}$ such that $T^{n}(U)\cap V\neq \emptyset$ for all  $n>N$. We notice that
  $T^{-1}$ is also topologically mixing, since $T$ is bijective.

\begin{proposition}
If $T:X\rightarrow X$ is topologically mixing then $\Phi$ is topologically mixing.
\end{proposition}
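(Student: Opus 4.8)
The plan is to verify the definition of topological mixing for $\Phi$ directly, working on a convenient basis of the weak topology. By Lemma~\ref{lema1} the weak-$\ast$ metric $d$ generates the topology on $\mathcal{P}(X)$, and by the approximation lemma finitely supported measures are dense. Hence it suffices to show: for every pair of finitely supported measures $\mu=\sum_{i=1}^{k}a_i\delta_{x_i}$ and $\nu=\sum_{j=1}^{m}b_j\delta_{y_j}$ and every $\varepsilon>0$, there is an $N$ such that for all $n>N$ one can produce $\rho\in\mathcal{P}(X)$ with $d(\rho,\mu)<\varepsilon$ and $d(\Phi^n(\rho),\nu)<\varepsilon$. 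Such a $\rho$ lies in the prescribed neighborhood of $\mu$ while $\Phi^n(\rho)$ lies in the prescribed neighborhood of $\nu$, so the $\Phi^n$-image of the first neighborhood meets the second for all large $n$, which is exactly mixing of $\Phi$ on basic open sets.

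First I would record how $\Phi$ acts on atoms: since $\Phi(\delta_x)=\delta_{T(x)}$ and $\Phi$ is affine, $\Phi^n\big(\sum_\ell c_\ell\delta_{z_\ell}\big)=\sum_\ell c_\ell\delta_{T^n(z_\ell)}$. Thus the weights of a finitely supported measure are \emph{preserved} by $\Phi^n$, only the support points being transported by $T^n$. This is the heart of the difficulty: a priori the weights $a_i$ of $\mu$ and $b_j$ of $\nu$ do not match, so one cannot simply push $\mu$ forward and hope to land near $\nu$. The device that resolves this is a product coupling. Fix $\eta>0$, to be chosen small depending on $\varepsilon$ and on the finitely many test functions $g_1,\dots,g_{n_0}$ controlling $d$ up to $\varepsilon/2$, exactly as in the earlier propositions. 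For each pair $(i,j)$ apply topological mixing of $T$ to the open balls $B_\eta(x_i)$ and $B_\eta(y_j)$: there is $N_{ij}$ so that for every $n>N_{ij}$ we have $T^n(B_\eta(x_i))\cap B_\eta(y_j)\neq\emptyset$, i.e. there is a point $z_{ij}^{(n)}\in B_\eta(x_i)$ with $T^n(z_{ij}^{(n)})\in B_\eta(y_j)$. Setting $N=\max_{i,j}N_{ij}$, for $n>N$ I define
\[
 \rho_n=\sum_{i=1}^{k}\sum_{j=1}^{m}a_i\,b_j\,\delta_{z_{ij}^{(n)}}.
\]
The coefficients $a_ib_j$ form the product coupling of the marginals $(a_i)$ and $(b_j)$: since $\sum_i a_i=\sum_j b_j=1$, the mass that $\rho_n$ places near $x_i$ is $\sum_j a_ib_j=a_i$, while the mass that $\Phi^n(\rho_n)=\sum_{i,j}a_ib_j\,\delta_{T^n(z_{ij}^{(n)})}$ places near $y_j$ is $\sum_i a_ib_j=b_j$. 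In particular $\rho_n$ is a probability measure.

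It remains to estimate the two distances, which follows the pattern already used in the approximation arguments. For $\ell\le n_0$, uniform continuity of $g_\ell$ and $z_{ij}^{(n)}\in B_\eta(x_i)$ give $|g_\ell(z_{ij}^{(n)})-g_\ell(x_i)|<\varepsilon/2$; since $\int g_\ell\,d\mu=\sum_{i,j}a_ib_j\,g_\ell(x_i)$ and $\int g_\ell\,d\rho_n=\sum_{i,j}a_ib_j\,g_\ell(z_{ij}^{(n)})$, each head term is at most $\varepsilon/2$, while the tail $\sum_{\ell>n_0}2^{-\ell}|\int g_\ell\,d\rho_n-\int g_\ell\,d\mu|$ is bounded by $\sum_{\ell>n_0}2^{-\ell}<\varepsilon/2$ because the $g_\ell$ take values in $[0,1]$. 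Hence $d(\rho_n,\mu)<\varepsilon$ for $\eta$ small. The identical estimate, now comparing $T^n(z_{ij}^{(n)})\in B_\eta(y_j)$ with $y_j$, yields $d(\Phi^n(\rho_n),\nu)<\varepsilon$. The one genuinely nontrivial point is the weight mismatch; once it is recognized as a transportation problem between the marginals $(a_i)$ and $(b_j)$, the explicit product coupling trivializes it, and everything else is the routine weak-$\ast$ computation. (Note that bijectivity of $T$ is not needed for this direction: mixing of $T$ alone drives the construction.)
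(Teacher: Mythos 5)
Your proof is correct, and it diverges from the paper's argument at exactly the step that matters. The paper also reduces to finitely supported measures, but it writes the two approximants over the same index set, $\mu' = \sum_{i=1}^{k} a_i\delta_{x_i}$ and $\nu' = \sum_{i=1}^{k} b_i\delta_{y_i}$, pairs the atoms one-to-one by applying mixing of the $k$-fold product map $(T,\dots,T)$ on $X^k$ (a fact it leaves to the reader), and transports $\bar\mu = \sum_{i=1}^{k} a_i\delta_{z_i}$ with $z_i$ near $x_i$ and $T^n(z_i)$ near $y_i$. Its final estimate controls only the displacement of the atoms, $\sum_j 2^{-j}\sum_i |f_j(T^n(z_i)) - f_j(y_i)|$, hence the distance from $\Phi^n(\bar\mu) = \sum_i a_i \delta_{T^n(z_i)}$ to $\sum_i a_i\delta_{y_i}$ --- not to $\nu' = \sum_i b_i\delta_{y_i}$: when $a_i \neq b_i$ the weight-discrepancy term $\big|\sum_i (a_i-b_i)f_j(y_i)\big|$ is left uncontrolled, so the paper's argument is complete only if the two weight vectors coincide (which can be arranged, e.g.\ by approximating both measures by uniform measures $\frac{1}{M}\sum_{i=1}^{M}\delta_{w_i}$ with a common $M$, but the paper never does this). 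Your product coupling $\rho_n = \sum_{i,j} a_i b_j \delta_{z_{ij}^{(n)}}$ is precisely the device that dissolves this issue: its marginal masses $\sum_j a_i b_j = a_i$ near $x_i$ and $\sum_i a_i b_j = b_j$ near $y_j$ reproduce both weight vectors automatically, so no matching or normalization of weights is needed. As a by-product you also bypass the product-map lemma, since mixing of $T$ applied to the $km$ pairs of balls $B_\eta(x_i)$, $B_\eta(y_j)$, with $N=\max_{i,j}N_{ij}$, suffices. The only cost is the double index; the weak-$\ast$ estimates themselves are the same routine computations in both arguments, and, as you note, bijectivity of $T$ plays no role in either.
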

\begin{proof}
We notice that given $k\in\N$ we have that the map
$$
 T^k:=(T,...,T):X^k\rightarrow X^k
$$
is topologically mixing if and only if $T$ is topologically mixing.
The proof is left to the reader.

If we take $\mu,\nu\in\mathcal{P}(X)$
and $\varepsilon>0$ and consider the open balls $B(\mu,\varepsilon)$ and
$B(\nu,\varepsilon)$ in $\mathcal{P}(X)$, then there exist
$\mu^{\prime}=\sum_{i=1}^k a_i\delta_{x_{i}}\in B(\mu,\varepsilon)$ and
$\nu^{\prime}=\sum_{i=1}^k b_i\delta_{y_{i}}\in B(\nu,\varepsilon)$.
Taking the points $(x_1,...,x_k),(y_1,...,y_k)\in X^k$ and $\delta >0$ such
that
 $$
   d((u_1,...,u_k),(v_1,...,v_k))<\delta \Rightarrow \sum_{j=1}^{\infty}\frac{1}{2^j}
   \sum_{i=1}^k|f_j(u_i)-f_j(v_i)| \leq\varepsilon_0,
 $$
where $\varepsilon_0$ is such that
$d(\mu,\mu^{\prime})+\varepsilon_0\leq \varepsilon$ and $d(\nu,\nu^{\prime})+\varepsilon_0\leq \varepsilon$.\\
Now we consider the open balls $B((x_1,...,x_k),\delta)$ and $B((y_1,...,y_k),\delta)$
in $X^k$. As $T^k$ is topologically mixing there exists $N\in\N$ such that
$$
  n>N\Rightarrow (T^k)^n(B((x_1,...,x_k),\delta))\cap
  B((y_1,...,y_k),\delta)\not=\emptyset.
$$
Then there exists $(z_1,...,z_k)\in B((x_1,...,x_k),\delta)$, such that $(T^k)^n(z_1,...,z_k)$ is in  $B((y_1,...,y_k),\delta)$. Finally we consider the measure
$\bar{\mu}=\sum_{i=1}^ka_i\delta_{z_{i}}$. As
$$
 d((x_1,...,x_k),(z_1,...,z_k))< \delta \Rightarrow
 \sum_{j=1}^{\infty}\frac{1}{2^j} \sum_{i=1}^k|f_j(x_i)-f_j(z_i)|
 \leq \varepsilon_0,
$$
and
$$
 d((T^n(z_1),...,T^n(z_k)),(y_1,...,y_k)) <
  \delta\Rightarrow \sum_{j=1}^{\infty}\frac{1}{2^j}
   \sum_{i=1}^k|f_j(T(z_i))-f_j(y_i)| \leq  \varepsilon_0,
$$
we get
$$
 d(\bar{\mu},\mu)\leq \varepsilon,  \mbox{ and }
  d(\nu,\Phi^n(\bar{\mu}))\leq \varepsilon.
$$
It implies that $\Phi^n(B(\mu,\varepsilon))\cap B(\nu,\varepsilon)\not=\emptyset.$
\end{proof}

\begin{remark}
  Is is well known  that any topologically mixing continuous transformation
  on a compact set is transitive; then we conclude that $T$ topologically mixing
  implies that $\Phi$ is transitive.
\end{remark}



\section{Limit sets}
In this section we consider some limit sets for the map $T$ and the consequences
on the induced push-forward map.

\subsection{Non-wandering set}

\begin{definition}
Given $p\in X$, $p$ is called non-wandering if for all $U$ neighborhood of $p$ and
$N\in\mathbb{N}$ , there exists $n\in\mathbb{N}$
such that $n>N$ and $T^{n}(U)\cap U\neq\varnothing$.
\end{definition}

\begin{proposition}
If $p\in X$ is non-wandering, then $\delta_{p}$ is non-wandering.
\end{proposition}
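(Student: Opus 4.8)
The plan is to show that if $p \in X$ is non-wandering for $T$, then $\delta_p \in \mathcal{P}(X)$ is non-wandering for $\Phi$. First I would recall that $\Phi(\delta_p) = \delta_{T(p)}$, which follows directly from the definition $\Phi(\mu)(A) = \mu(T^{-1}(A))$: indeed $\delta_p(T^{-1}(A)) = 1$ iff $p \in T^{-1}(A)$ iff $T(p) \in A$, so $\Phi(\delta_p) = \delta_{T(p)}$, and by iteration $\Phi^n(\delta_p) = \delta_{T^n(p)}$. This reduces the dynamics of $\Phi$ along the orbit of $\delta_p$ to the dynamics of $T$ along the orbit of $p$, which is exactly the kind of correspondence the rest of the paper exploits.

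Next I would take an arbitrary neighborhood $\mathcal{U}$ of $\delta_p$ in $\mathcal{P}(X)$ and a threshold $N \in \mathbb{N}$, and produce some $n > N$ with $\Phi^n(\mathcal{U}) \cap \mathcal{U} \neq \varnothing$. The key step is to translate the metric neighborhood $\mathcal{U}$ back to a neighborhood of $p$ in $X$. Working with one of the metrics from Lemma~\ref{lema1} (say the weak-$\ast$ metric), I would find $\rho > 0$ such that the ball $B_\rho(\delta_p)$ is contained in $\mathcal{U}$, and then use continuity of the measure evaluation to find $r > 0$ so that whenever $d(p, y) < r$ one has $d(\delta_p, \delta_y) < \rho$; that is, the map $y \mapsto \delta_y$ is continuous, so $B_r(p)$ maps into $\mathcal{U}$. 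Setting $U := B_r(p)$, the non-wandering hypothesis on $p$ gives an $n > N$ with $T^n(U) \cap U \neq \varnothing$.

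Finally I would pick a witness point $z \in U$ with $T^n(z) \in U$. Then $\delta_z \in \mathcal{U}$ (since $z \in U = B_r(p)$ maps into $\mathcal{U}$ under $y \mapsto \delta_y$), and $\Phi^n(\delta_z) = \delta_{T^n(z)} \in \mathcal{U}$ (since $T^n(z) \in U$ as well). Hence $\delta_{T^n(z)} \in \Phi^n(\mathcal{U}) \cap \mathcal{U}$, so the intersection is nonempty, which is precisely the non-wandering condition for $\delta_p$.

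I expect the only genuine subtlety to be the continuity of the embedding $y \mapsto \delta_y$ and the resulting control on the neighborhoods: one must make sure that points close to $p$ yield Dirac measures close to $\delta_p$ in whichever metric is chosen, and conversely that a neighborhood of $\delta_p$ captures a full ball $B_r(p)$ worth of Diracs. With the weak-$\ast$ distance this is immediate from the uniform continuity of the finitely many relevant test functions $g_i$, exactly as in the approximation lemmas already established; everything else is a direct transfer of the non-wandering property of $T$ through the identity $\Phi^n(\delta_p) = \delta_{T^n(p)}$.
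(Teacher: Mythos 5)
Your proposal is correct and follows essentially the same route as the paper: both proofs pull a neighborhood of $\delta_p$ back to a ball around $p$ via the Dirac embedding, apply the non-wandering property of $p$ to get a witness $z$ with $T^n(z)$ back in that ball, and push the witness forward using $\Phi^n(\delta_z)=\delta_{T^n(z)}$. The only cosmetic difference is that the paper works with the Prokhorov metric and the inequality $d_P(\delta_x,\delta_y)\leq d(x,y)$, which lets it use the same radius $\varepsilon$ on both sides, whereas you invoke continuity of $y\mapsto\delta_y$ with two radii $\rho$ and $r$ (and you are more careful than the paper about the ``for all $N$, there exists $n>N$'' quantifier).
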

\begin{proof}
 Let $p$ be non-wandering. Then given $\varepsilon>0$, there exists $n\in\mathbb{N}$
 such that $T^{n}(B_{\varepsilon}(p))\cap B_{\varepsilon}(p)$, i.e., there exists
 $q\in T^{n}(B_{\varepsilon}(p))\cap B_{\varepsilon}(p)$. Now we take $\delta_{q}$ and notice that
$$
 d_{P}(\delta_p,\delta_q)\leq d(p,q)\Rightarrow \delta_{q}\in B_{\varepsilon}(\delta_{p}),
$$
and as $q\in T^{n}(B_{\varepsilon}(p))$, there exists $x\in B_{\varepsilon}(p)$, such that $q=T^{n}(x)$. Then
$$
 \delta_{q}=\delta_{T^{n}(x)}=\Phi^{n}(\delta_{x})\in \Phi^{n}(B_{\varepsilon}(\delta_{p})).
$$
Finally we conclude that
$\delta_{q}\in B_{\varepsilon}(\delta_{p})\cap\Phi^{n}(B_{\varepsilon}(\delta_{p}))\neq\varnothing.$
\end{proof}

\subsection {$\omega$-limit}
\begin{definition}
Let $T:X\rightarrow X$ a continuous map. Let $x\in X$. A point $y\in X$ is an
$\omega-$limit point if there exists a sequence of natural numbers $n_k\rightarrow\infty$
(as $k\rightarrow\infty$) such that $T^{n_k}(x)\rightarrow y$. The $\omega-$limit set is the set
 $\omega(x)$ of all  $\omega-$limit points.
\end{definition}
\begin{proposition}\label{omega limite}
If $q$ $\in $ $\omega(p)$, then $\delta_{q}\in \omega(\delta_{p})$.
\end{proposition}
\begin{proof}
We need to show that there exists a sequence $\{\Phi^{n_{k}}(\delta_{p})\}_{n_{k}\in\mathbb{N}}$, such that,
 $n_{k}\rightarrow\infty$ and $\Phi^{n_{k}}(\delta_{p})\rightarrow\delta_{q}$. Since $q$ $\in $ $\omega(p)$,
there exists a sequence $\{T^{n_{k}}(p)\}_{n_{k}\in\mathbb{N}}$, such that, $T^{n_{k}}(p)\rightarrow q$.
 Now given $g\in C(X)$ we have that

$$
 \Big|\int_{X}g(x)d(\Phi^{n_{k}}(\delta_{p}))-\int_{X}g(x)d(\delta_{q})\Big|=|g(T^{n_{k}}(p))-g(q)|.
$$
As $g$ is continuous and $T^{n_{k}}(p)\rightarrow  q$, $g(T^{n_{k}}(p))\rightarrow g(q)$. Then we get
$$
 \int_{X}g(x)d(\Phi^{n_{k}}(\delta_{p}))\rightarrow\int_{X}g(x)d(\delta_{q}), \ \ \ \forall g \in C(X).
$$
Hence
$$
 d(\Phi^{n_{k}}(\delta_{p}),\delta_{q})\rightarrow 0.
$$
\end{proof}

\begin{definition}
A point $p \in X$ is called recurrent if $x\in\omega(x)$. The set $\mathcal{R}(T)$ of recurrent points is $T$-invariant.
\end{definition}

Hence, by Proposition \ref{omega limite}, given $x\in\mathcal{R}(T)$, we have that $\delta_{x}\in\omega(\delta_{x})$. Then
$$
 x\in\mathcal{R}(T)\Rightarrow\delta_{x}\in\mathcal{R}(\Phi).
$$

\section{Attractors}
Here we are interested in know what happens with the dynamics $\Phi$ when the dynamics $T$
has an attractor. We divide our study in two cases: the first one consists in a map $T$ that has a point  $p$ as
an attractor and the second one consists in a map that has a uniform attractor.
\subsection{Point attractor}
\begin{lemma}\label{entropia}
Let $T:X\rightarrow X$ be a continuous map such that $T:X\rightarrow T(X)$ is a homeomorphism.
If  $\displaystyle\lim_{n\rightarrow\infty} T^{n}(x)=p$,
 for all $x\in X$, then the sequence of maps $\{T^{n}\}_{n\in\mathbb{N}}$ converges uniformly
 to the constant map $F:X\rightarrow X$, $F(x)=p$ for all $x\in X$.
\end{lemma}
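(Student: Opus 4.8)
The plan is to reduce the uniform statement to a statement about the nested images of $T$ and then to pin down the limit set. Since $T$ is continuous, $T^{n+1}(x)=T(T^n(x))\to T(p)$ while also $T^{n+1}(x)\to p$, so $p$ is a fixed point. Set $K_n:=T^n(X)$. Because $T(X)\subseteq X$ one gets $K_1\subseteq K_0=X$ and, applying $T$ inductively, $K_{n+1}=T(K_n)\subseteq K_n$; each $K_n$ is compact as a continuous image of the compact set $X$, and $p=T^n(p)\in K_n$. Writing $K_\infty:=\bigcap_{n\ge 1}K_n$, I would first record the elementary equivalence
$$
\sup_{x\in X} d\bigl(T^n(x),p\bigr)=\sup_{y\in K_n} d(y,p)\xrightarrow[n\to\infty]{}0
\quad\Longleftrightarrow\quad K_\infty=\{p\}.
$$
The forward implication is immediate; for the converse one uses that a decreasing sequence of compacta converges to its intersection in the Hausdorff metric, so if $K_\infty=\{p\}$ then the radius $\sup_{y\in K_n}d(y,p)$ of $K_n$ about $p$ tends to $0$. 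This turns the lemma into the single assertion $K_\infty=\{p\}$.

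Next I would exploit the homeomorphism hypothesis on $K_\infty$. For $y\in K_\infty$ and each $n$ we have $y\in K_n=T(K_{n-1})$, so the compact sets $T^{-1}(y)\cap K_{n-1}$ are nonempty and decreasing in $n$; by the finite intersection property their intersection $T^{-1}(y)\cap K_\infty$ is nonempty, giving a preimage of $y$ inside $K_\infty$. Hence $T(K_\infty)=K_\infty$, and since $T$ is injective (being a homeomorphism onto its image), $T|_{K_\infty}\colon K_\infty\to K_\infty$ is a bijection, indeed a homeomorphism of the compact set $K_\infty$. On $K_\infty$ every forward orbit still converges to $p$ by hypothesis, so any recurrent point $x$ of $T|_{K_\infty}$ satisfies $x\in\omega(x)=\{p\}$, i.e. $p$ is the only recurrent point; one then attempts to exclude every other point of $K_\infty$ by examining its (now well-defined) backward orbit.

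The main obstacle is precisely this last step, that is, upgrading pointwise convergence to $K_\infty=\{p\}$, equivalently to uniform convergence. Mere pointwise convergence on a compact space together with compactness of $X$ does \emph{not} force it: if one knew that the family $\{T^n\}_{n\in\N}$ were equicontinuous, the conclusion would be immediate from Arzel\`a--Ascoli, since an equicontinuous family that converges pointwise on a compact space converges uniformly; but equicontinuity is not automatic under these hypotheses. Thus the real content is to show that a homeomorphism $T|_{K_\infty}$, all of whose orbits converge to the single fixed point $p$, can admit no other point of $K_\infty$ — one must genuinely use injectivity, and whatever structure of $X$ is available, to rule out a rotation-type situation in which $T$ is onto $K_\infty$ yet still drives every orbit to $p$ (in which case $K_\infty$ would \emph{not} collapse to $\{p\}$). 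I expect the crux of the argument, and the one place where the hypothesis that $T$ is a homeomorphism onto its image must be used in full, to be exactly the proof that $\bigcap_{n\ge 1}T^n(X)=\{p\}$.
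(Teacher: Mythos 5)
Your reduction is correct as far as it goes --- the equivalence of uniform convergence with $\bigcap_{n\ge 1}T^{n}(X)=\{p\}$, and the identity $T(K_\infty)=K_\infty$, are both properly justified --- but the proposal stops exactly where a proof would have to begin, so it is not a proof. More importantly, the step you left open cannot be carried out, because the lemma as stated is \emph{false}. Take $X=\mathbb{S}^{1}=\mathbb{R}/\mathbb{Z}$ and let $T$ be the circle homeomorphism induced by the lift $F(x)=x+\frac{1}{10}\sin^{2}(\pi x)$; this $F$ is increasing, satisfies $F(x+1)=F(x)+1$, and its only fixed points are the integers. For $x\in(0,1)$ the lifted orbit $F^{n}(x)$ is increasing and bounded above by $1$, hence converges to $1$; so on the circle every orbit converges to the unique (semi-stable, parabolic) fixed point $p=0$. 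Yet $T$ is a homeomorphism of $X$ onto $T(X)=X$, so $K_{n}=T^{n}(X)=\mathbb{S}^{1}$ for every $n$, $K_\infty=\mathbb{S}^{1}\neq\{p\}$, and $\sup_{x}d(T^{n}(x),p)=\frac12$ for all $n$. This is precisely the ``rotation-type situation'' you hoped to exclude: it is realizable, and injectivity does not rule it out.

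Your instinct about where the danger lies was exactly right, and it is the paper that goes wrong: its entire proof of the lemma is the assertion that pointwise convergence of $T^{n}$ to a constant map on a compact space is automatically uniform --- the very principle you rejected (it already fails for $f_{n}(x)=nx(1-x)^{n}$ on $[0,1]$, and it fails here). Two repairs are worth recording. First, the lemma becomes true under a stability hypothesis on $p$ (for instance $T$ a contraction, or $p$ Lyapunov stable): with a positively invariant $\delta$-neighborhood of $p$, the covering argument ($X=\bigcup_{i}U_{x_i}$ with $T^{N_{x_i}}(U_{x_i})\subset B_{\delta}(p)$, then $N=\max_i N_{x_i}$) does close, and this is the only setting in which the paper genuinely uses the lemma (the corollary on $C$-Lipschitz maps with $C<1$). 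Second, the main downstream statement, Proposition \ref{ponto fixo para medida}, survives without any uniformity: for $g\in C(X)$, dominated convergence gives
$$
\int_{X}g\,d(\Phi^{n}(\mu))=\int_{X}g\circ T^{n}\,d\mu\longrightarrow g(p)=\int_{X}g\,d\delta_{p},
$$
so $\Phi^{n}(\mu)\to\delta_{p}$ weakly for every $\mu$. By contrast, the entropy statements built on the lemma (the claim $h(T)=0$ and Theorem \ref{corolario}) use the uniform convergence in an essential way (to bound $sep(T,n,\varepsilon)$ by $sep(T,N_{\varepsilon},\varepsilon)$) and would have to be either restricted to the stable setting or proved differently.
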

\vspace{-.4 cm}
\begin{proof}
Consider the following sequence of continuous maps $G_{n}=T^{n}:X\rightarrow X $ and the map
$F:X\rightarrow X$ given by $F(x)=p$
for all $x\in X$.  We observe that $G_{n}(x)\rightarrow P$ for all $x\in X$, i.e,
$G_{n}$ converges to $F$ pointwise. As $X$ is compact we have that $G_{n}\rightarrow F$, uniformly.
\end{proof}

\begin{proposition}\label{ponto fixo para medida}
Let $T:X\rightarrow X$ be a continuous map such that $T:X\rightarrow T(X)$ is a homeomorphism. If
 $\displaystyle\lim_{n\rightarrow\infty}T^{n}(x)=p$,
$\forall x \in X$, then $\displaystyle\lim_{n\rightarrow\infty}\Phi^{n}(\mu)=\delta_{p}$, $\forall \mu \in \mathcal{P}(X)$.
\end{proposition}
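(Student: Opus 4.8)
The plan is to prove convergence in the weak topology, which by Lemma \ref{lema1} coincides with the topology induced by each of the metrics on $\mathcal{P}(X)$ under consideration. Recall that a sequence of probability measures $\nu_n$ converges weakly to $\nu$ if and only if $\int_X g\,d\nu_n \to \int_X g\,d\nu$ for every $g \in C(X)$. Since $\int_X g\,d\delta_p = g(p)$, it therefore suffices to show that
$$
  \int_X g\,d(\Phi^n(\mu)) \longrightarrow g(p), \qquad \forall\, g\in C(X),\ \forall\,\mu\in\mathcal{P}(X).
$$

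First I would use the change of variables formula (Lemma \ref{Change of variables}) to rewrite the left-hand side as an integral over the fixed measure $\mu$, namely
$$
  \int_X g\,d(\Phi^n(\mu)) = \int_X g\big(T^n(x)\big)\,d\mu(x).
$$
This is the key reduction: it moves the $n$-dependence off the measure and onto the integrand, where the hypothesis on $T$ can be applied.

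The decisive ingredient is Lemma \ref{entropia}, which tells us that $T^n \to F$ uniformly on $X$, where $F$ is the constant map $F(x)=p$. Since $X$ is compact, every $g\in C(X)$ is uniformly continuous, and composing with the uniform convergence $T^n\to F$ yields $g\circ T^n \to g\circ F = g(p)$ \emph{uniformly} on $X$. Given uniform convergence of the integrands on a finite-measure space (here $\mu(X)=1$), the limit passes under the integral, so
$$
  \int_X g\big(T^n(x)\big)\,d\mu(x) \longrightarrow \int_X g(p)\,d\mu(x) = g(p).
$$
Combining this with the reduction above gives $\int_X g\,d(\Phi^n(\mu))\to g(p)$ for every $g$, hence $\Phi^n(\mu)\to\delta_p$ weakly, which is the claim.

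I do not expect a serious obstacle here: the argument is essentially a clean application of the previously established uniform convergence lemma together with the change of variables formula. The only point requiring care is the passage of the limit inside the integral, and I would justify it precisely by the uniform (rather than merely pointwise) convergence of $g\circ T^n$, which is what Lemma \ref{entropia} affords and which makes the interchange immediate on the probability space $(X,\mu)$.
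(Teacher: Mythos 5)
Your proof is correct and follows essentially the same route as the paper's: both reduce the claim to weak convergence tested against $g\in C(X)$, apply the change of variables formula to obtain $\int_X g\,d(\Phi^n(\mu))=\int_X g(T^n(x))\,d\mu$, and then use the uniform convergence $T^n\to p$ from Lemma \ref{entropia} together with the (uniform) continuity of $g$ to bound the difference by $\sup_{x\in X}|g(T^n(x))-g(p)|\to 0$. Your write-up is merely a bit more explicit than the paper's about why the limit passes under the integral.
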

\vspace{-.4 cm}
\begin{proof}
 Take $\varepsilon>0$. We need to show that there exists $n_0\in \N$ such that
$$
 n>n_0\Rightarrow d(\Phi^n(\mu),\delta_{T^n(p)})<\varepsilon.
$$
  By  Lemma \ref{entropia} we can see that given $\delta>0$, there exists $n_{0}\in\mathbb{N}$,
such that $d(T^{n}(x),p)<\delta$, for all $x\in X$ and $n>n_{0}$.
 Now we take $g\in C(X)$ and see that
\begin{align*}
\Big|\int_{X}g(x)d(\Phi^{n}(\mu))-\int_{X}g(x)d\delta_{p}\Big|&=\Big|\int_{X}(g(T^n(x))-g(p))d\mu\Big|
\\&\leq\int_{X}|(g(T^n(x))-g(p))|d\mu
\\&
\leq \sup_{x\in X} |(g(T^n(x))-g(p))|.
\end{align*}
 Since $g\in C(X)$
we get
$\displaystyle\Big|\int_{X}g(x)d(\Phi^{n}(\mu))-\int_{X}g(x)d\delta_{p}\Big|\rightarrow 0$, for all $g\in C(X)$.
Hence $d(\Phi^{n}(\mu),\delta_{p})\rightarrow0$.
\end{proof}

\subsection{Uniform attractor}
 In this section we define the concept of uniform attractor and see what happens with the dynamics $\Phi$
when $T$ has a uniform attractor. To do that we  suppose that $X$ is separable.

\begin{definition}
Let $\Lambda\subseteq X$ be a compact set such that $T(\Lambda)\subseteq\Lambda$.
We say that $\Lambda$ is a uniform attractor for $T$, if for all $\varepsilon>0$,
there exists $n_{0}\in\mathbb{N}$ such that
$$
  n>n_{0}\Rightarrow d(T^{n}(x),\Lambda)<\varepsilon, \ \forall x\in X.
$$
\end{definition}

\begin{lemma}
Let $T:X\rightarrow X$ be a homeomorphism from $X$ to $T(X)$ and $A=\{a_{j}\}_{j\in\mathbb{N}}$ dense in X.
Then $T^{n}(A)$ is dense in $T^{n}X$, for all $n\in\mathbb{N}$.
\end{lemma}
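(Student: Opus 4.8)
The plan is to reduce the statement to the elementary fact that a homeomorphism (indeed any continuous surjection) carries dense sets onto dense sets of its image, and then to iterate. First I would record the basic lemma: if $h\colon X\to Y$ is a continuous surjection and $A\subseteq X$ is dense, then $h(A)$ is dense in $Y$. The verification is immediate: given any nonempty open $U\subseteq Y$, surjectivity and continuity make $h^{-1}(U)$ a nonempty open subset of $X$, so density of $A$ produces a point $a\in A\cap h^{-1}(U)$, whence $h(a)\in h(A)\cap U$. Applying this with $h=T\colon X\to T(X)$, which is a homeomorphism by hypothesis and in particular a continuous surjection onto $T(X)$, shows that $T(A)$ is dense in $T(X)$ and settles the case $n=1$.

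The second step is an induction showing that each iterate $T^{n}\colon X\to T^{n}(X)$ is itself a homeomorphism onto its image. Assuming $T^{n-1}\colon X\to T^{n-1}(X)$ is a homeomorphism, I would observe that the restriction of $T$ to the subset $T^{n-1}(X)\subseteq X$ is a homeomorphism from $T^{n-1}(X)$ onto $T(T^{n-1}(X))=T^{n}(X)$, since the restriction of the homeomorphism $T\colon X\to T(X)$ to any subspace is a homeomorphism onto the image of that subspace (its inverse is the restriction of the continuous inverse $T^{-1}$). Composing $T^{n-1}\colon X\to T^{n-1}(X)$ with this restriction yields that $T^{n}\colon X\to T^{n}(X)$ is a homeomorphism.

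Finally I would combine the two steps: since $A$ is dense in $X$ and $T^{n}\colon X\to T^{n}(X)$ is a continuous surjection (in fact a homeomorphism) onto $T^{n}(X)$, the basic lemma of the first paragraph yields that $T^{n}(A)$ is dense in $T^{n}(X)$, as claimed. The argument involves no genuine obstacle; the only point requiring a little care is to keep track of the subspace topologies, namely that density of $T^{n}(A)$ is measured inside $T^{n}(X)$ with its subspace topology, and that the restriction of a homeomorphism to a subspace remains a homeomorphism onto its image. Both are standard facts about the subspace topology, so the proof is essentially a two-line application of them.
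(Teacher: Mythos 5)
Your proof is correct and follows essentially the same route as the paper's: both come down to the fact that a continuous map sends a dense set onto a dense subset of its image, the paper running the $\varepsilon$--$\delta$ version of this for $T^{n}$ (continuity of $T^{n}$ plus density of $A$), while you run the equivalent open-set version. Your middle paragraph, the induction showing $T^{n}\colon X\to T^{n}(X)$ is a homeomorphism, is harmless but superfluous: as you yourself observe, the key lemma needs only a continuous surjection, and $T^{n}\colon X\to T^{n}(X)$ is automatically that, which is all the paper uses as well.
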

\begin{proof}
Take $x\in X$ and $\varepsilon>0$, we have to show that for all $n\in\mathbb{N}$, there exists $a_{i}\in A$
such that $d(T^{n}(x), T^{n}(a_{i}))<\varepsilon.$ Since $T^{n}$ is a continuous map, there exists
 $\delta>0$ such that
$$
 d(y,a_{i})<\delta\Rightarrow d(T^{n}(y),T^{n}(a_{i}))<\varepsilon.
$$
Using the density of $A$ in $X$ we get the result.
\end{proof}

\begin{lemma} \label{lema}
\begin{flushleft}
 (i)   If $\mu=\sum^{l}_{i=1}\alpha_{i}\delta_{a_{i}}$, and $\nu=\sum^{k}_{i=1}\beta_{i}\delta_{b_{i}}$, then
$$
 d_{P}(\nu,\mu)\leq \max\{d(a_{i},b_{j})\}.
$$
(ii) If $\mu=\sum^{l}_{i=1}\alpha_{i}\delta_{a_{i}}$ and $\nu=\sum^{l}_{i=1}\alpha_{i}\delta_{b_{i}}$, then
$$
 d_{P}(\nu,\mu)\leq \min\{d(a_{i},b_{i})\},
$$
\end{flushleft}
where $d_P$ is the Prokhorov distance.
\end{lemma}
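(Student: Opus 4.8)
The plan is to treat the two parts separately, arguing directly from the definition of the Prokhorov distance $d_P$; since both measures are finitely atomic, the enlarged sets $A_\alpha=\{x:d(x,A)<\alpha\}$ are easy to control and no optimization over couplings is needed.

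For part (i) I would set $R:=\max_{i,j}d(a_i,b_j)$ and fix any $\alpha>R$. The key observation is that if a Borel set $A$ has $\mu(A)>0$, then $A$ contains at least one atom $a_{i_0}$ of $\mu$; since $d(b_j,a_{i_0})\le R<\alpha$ for every $j$, \emph{every} atom $b_j$ of $\nu$ lies in $A_\alpha$, whence $\nu(A_\alpha)=1\ge\mu(A)$. When $\mu(A)=0$ the inequality $\mu(A)\le\nu(A_\alpha)+\alpha$ is trivial, and the reversed inequality $\nu(A)\le\mu(A_\alpha)+\alpha$ follows by exchanging the roles of $\mu$ and $\nu$. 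Thus every $\alpha>R$ is admissible in the infimum defining $d_P(\nu,\mu)$, and letting $\alpha\downarrow R$ yields $d_P(\nu,\mu)\le R$, as claimed.

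For part (ii) the two measures share the same weight vector $(\alpha_i)$, so the natural refinement of the previous argument is to pair $a_i$ with $b_i$: put $M:=\max_i d(a_i,b_i)$ and fix $\alpha>M$. For any Borel $A$ and any index $i$ with $a_i\in A$ one has $d(b_i,a_i)\le M<\alpha$, so $b_i\in A_\alpha$; summing the weights gives $\nu(A_\alpha)\ge\sum_{i:a_i\in A}\alpha_i=\mu(A)$, and symmetrically. This produces $d_P(\nu,\mu)\le M=\max_i d(a_i,b_i)$, a genuine sharpening of (i) since $\max_i d(a_i,b_i)\le\max_{i,j}d(a_i,b_j)$.

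The main obstacle is that the bound recorded in (ii) is written with $\min$, not $\max$, and with $\min$ it is simply false. Indeed, take two equally weighted atoms, $\mu=\tfrac12\delta_{a_1}+\tfrac12\delta_{a_2}$ and $\nu=\tfrac12\delta_{b_1}+\tfrac12\delta_{b_2}$, with $a_1=b_1$ but $a_2,b_2$ at distance at least $1$ from each other and from $a_1$. Then $\min_i d(a_i,b_i)=0$, yet testing the definition of $d_P$ on $A=\{b_2\}$ gives $\nu(A)=\tfrac12$ while $\mu(A_\alpha)=0$ for all $\alpha<1$, forcing $d_P(\mu,\nu)\ge\tfrac12>0$. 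Hence no argument can establish (ii) as literally stated; the matched-pair estimate of the previous paragraph shows that the intended and provable bound is $\max_i d(a_i,b_i)$, and I would record part (ii) in that corrected form.
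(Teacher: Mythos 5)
Your proposal is correct, and for part (i) it is essentially the paper's own argument: the paper likewise fixes $\gamma>\max_{i,j}d(a_i,b_j)$ and observes that as soon as a Borel set $A$ contains some atom of $\mu$, every atom of $\nu$ lies in $A_\gamma$ (and symmetrically), so both Prokhorov inequalities hold at level $\gamma$; you merely make explicit the trivial case $\mu(A)=0$, which the paper leaves implicit.

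For part (ii) you have caught a genuine error in the paper. The paper's own proof takes $\gamma>\min_i d(a_i,b_i)$ and then asserts, for every index $i$, that $a_i\in A$ implies $b_i\in A_\gamma$; this implication requires $d(a_i,b_i)<\gamma$ for that particular $i$, which $\gamma>\min_j d(a_j,b_j)$ only guarantees for the minimizing index. Your two-atom counterexample ($a_1=b_1$, the remaining points at mutual distance at least $1$, test set $A=\{b_2\}$, yielding $d_P(\mu,\nu)\ge \tfrac12$ while $\min_i d(a_i,b_i)=0$) conclusively refutes the statement as printed. Your corrected bound $d_P(\nu,\mu)\le\max_i d(a_i,b_i)$, proved by the matched-pair argument, is both true and exactly what the paper actually uses: in the proof of Theorem \ref{teorema} the lemma is invoked through the chain $d_{P}(\Phi^{n}(\nu),\nu^{\prime})<\min\{d(T^{n}(a_{i}),q_{i})\}<\max\{d(T^{n}(a_{i}),q_{i})\}<\varepsilon$, where every paired distance is already below $\varepsilon$, so that application goes through verbatim with $\max$ in place of $\min$. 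Note also that your corrected (ii) genuinely sharpens (i), since $\max_i d(a_i,b_i)\le\max_{i,j}d(a_i,b_j)$. In short: your proof of (i) matches the paper's; for (ii) the paper's statement and proof are flawed, and the lemma should be recorded in your corrected form.
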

\begin{proof}
 (i) We take $\gamma>\max\{d(a_{i},b_{j})\}$ and $A\in \mathcal{B}(X)$. Then
$$
 \exists \  a_{i}\in A\Rightarrow  b_{j}\in A_{\gamma}, \  \forall \ j,
 \ \mbox{and} \ \exists
b_{i}\in A\Rightarrow a_{j}\in A_{\gamma}\ \forall j.
$$
Hence we have that
$$
 \mu(A)\leq \nu(A_{\gamma})+\gamma,  \ \ \ \nu(A)\leq \mu(A_{\gamma})+\gamma,
$$
for all $A\in \mathcal{B}(X)$. Then, by the definition of $d_{P}$, we conclude
$$
 d_{P}(\nu,\mu)\leq \max\{ d(a_{i},b_{j})\}.
$$
\begin{flushleft}
(ii) We take $\gamma>\min\{d(a_{i},b_{i})\}$. We notice that
$$
  \exists   a_{i}\in A\Rightarrow  b_{i}\in A_{\gamma}, \ \mbox{and} \ \exists
b_{i}\in A\Rightarrow a_{i}\in A_{\gamma}.
$$
for all $A\in \mathcal{B}(X)$. Then, by the definition of $d_{P}$, we conclude
$$
 d_{P}(\nu,\mu)\leq \min\{ d(a_{i},b_{i})\}.
$$
\end{flushleft}
\end{proof}

\begin{lemma}\label{sepa}
If $X$ is a compact separable metric space then $\mathcal{P}(X)$ is a compact separable metric space.
\end{lemma}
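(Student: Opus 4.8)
The plan is to regard $\cP(X)$ as a metric space under the weak-$\ast$ distance $d(\mu,\nu)=\sum_{i=1}^{\infty}2^{-i}\,|\int_X g_i\,d\mu-\int_X g_i\,d\nu|$ and to verify the two required properties in turn: first that it admits a countable dense subset, and then that it is compact. Separability is the lighter half and falls out of the approximation machinery already in place, whereas compactness is where I expect the real difficulty to lie.

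For separability, fix a countable dense set $\{a_j\}_{j\in\N}\subset X$ (available since $X$ is separable) and let $\cQ$ be the countable family of all finite convex combinations $\sum_{i=1}^{m}q_i\delta_{a_{j_i}}$ with rational weights $q_i\geq 0$ summing to $1$. I would show $\cQ$ is dense in three steps, each of size $\varepsilon/3$. Given $\mu$ and $\varepsilon>0$, the approximation lemma proved above produces $\nu=\sum_{i=1}^{N}\mu(P_i)\delta_{p_i}$ with $d(\mu,\nu)<\varepsilon/3$. Next, since $X$ is compact the functions $g_j$ are uniformly continuous, so choosing dense points $a_{j_i}$ sufficiently close to the atoms $p_i$ changes each integral $\int g_j\,d\nu$ only slightly; this replaces $\nu$ by $\nu'=\sum_i\mu(P_i)\delta_{a_{j_i}}$ with $d(\nu,\nu')<\varepsilon/3$ (the same estimate is available in the Prokhorov metric through Lemma \ref{lema}(ii)). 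Finally, perturbing the weights $\mu(P_i)$ to nearby rationals $q_i$ alters $\int g_j\,d\nu'=\sum_i\mu(P_i)g_j(a_{j_i})$ by at most $\sum_i|\mu(P_i)-q_i|$, since $0\leq g_j\leq 1$, so a suitable rational choice lands within $\varepsilon/3$. The triangle inequality then puts an element of $\cQ$ within $\varepsilon$ of $\mu$.

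For compactness I would prove sequential compactness, which is equivalent to compactness in a metric space. Given $(\mu_n)\subset\cP(X)$, the scalar sequences $(\int_X g_i\,d\mu_n)_n$ all lie in $[0,1]$, so a diagonal extraction yields a subsequence $(\mu_{n_k})$ along which $\int g_i\,d\mu_{n_k}$ converges for every $i$; controlling the tail $\sum_{i>M}2^{-i}$ then shows $(\mu_{n_k})$ is $d$-Cauchy. \textbf{The main obstacle} is to realize the limit as an honest probability measure rather than merely a Cauchy sequence: using density of $\{g_i\}$ in $C(X,[0,1])$ one upgrades the convergence to $\int g\,d\mu_{n_k}\to L(g)$ for every $g\in C(X)$, where $L$ is positive, linear and normalized, and then the Riesz representation theorem supplies a measure $\mu\in\cP(X)$ with $L(g)=\int g\,d\mu$; a final check gives $d(\mu_{n_k},\mu)\to 0$. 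If this identification proves cumbersome, I would instead combine completeness of $\cP(X)$ (from Lemma \ref{lema1}(ii)) with total boundedness, the latter coming from the grid construction together with compactness of the finite-dimensional weight simplex $\Delta_N$; either route closes the argument.
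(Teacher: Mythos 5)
Your proof is correct, but it does noticeably more than the paper's own proof, and it is worth recording the difference. For separability you construct exactly the same countable set as the paper --- rational convex combinations of Dirac masses at points of a countable dense subset of $X$ --- but where the paper's entire argument is ``it is not difficult to see that $\mathcal{A}$ is an enumerable dense set in $\mathcal{P}(X)$,'' you actually supply the three-step $\varepsilon/3$ verification (grid approximation, moving atoms to dense points via uniform continuity of the $g_j$'s, rationalizing the weights), which is the honest content behind that sentence. For compactness the paper proves nothing at all: it silently relies on Lemma \ref{lema1}(ii), which is itself only a citation to Aliprantis--Border. Your primary route --- diagonal extraction on the sequences $\bigl(\int g_i\,d\mu_n\bigr)_n$, upgrading convergence from the dense family $\{g_i\}$ to all of $C(X)$, and invoking Riesz representation to realize the limit functional as a probability measure --- is the standard self-contained proof of weak-$\ast$ compactness and closes the gap the paper leaves to the literature. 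One small caveat: your fallback route (completeness from Lemma \ref{lema1}(ii) plus total boundedness) is essentially circular, since that lemma already asserts compactness outright; and your parenthetical appeal to Lemma \ref{lema}(ii) inherits a defect of the paper, as the bound stated there with $\min\{d(a_i,b_i)\}$ should really be $\max\{d(a_i,b_i)\}$. Neither issue affects your main argument, which stands on its own and is more complete than what the paper offers.
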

\begin{proof} Let $A$ the enumerable dense set in $X$. Consider
$$
 \mathcal{A}=\Big\{\sum_{i=1}^{k}\alpha_i\delta_{x_i}:\alpha_1,...,\alpha_{i_l}
 \in[0,1] \cap \mathbb{Q}, \ x_i\in A \mbox { and } k\in\mathbb{N} \Big\}.
$$
It is not difficult to see that $\mathcal{A}$ is an enumerable dense set in $\mathcal{P}(X)$.
\end{proof}

\begin{theorem}\label{teorema}
Let $\Lambda\subseteq X$ be an uniform attractor for $T$.  If
 $$
  \mathcal{D}:=\Big\{\sum_{i=1}^{k}\alpha_{i}\delta_{q_{i}}:\sum_{i=1}^{k}\alpha_{i}=1,\
\ q_{i}\in \Lambda, \ \ \alpha_{i}\in [0,1]\cap\mathbb{Q},\ k\in \mathbb{N}\Big\},
$$
then $\overline{\mathcal{D}}$
 is an uniform attractor for $\Phi$.
\end{theorem}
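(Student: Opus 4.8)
The plan is to show that $\overline{\mathcal{D}}$ attracts every measure uniformly, i.e.\ that for every $\varepsilon>0$ there is $n_0$ such that $d(\Phi^n(\mu),\overline{\mathcal{D}})<\varepsilon$ for all $\mu\in\mathcal{P}(X)$ and all $n>n_0$. Since $\Phi$ is continuous and $\mathcal{P}(X)$ is compact by Lemma~\ref{sepa}, $\overline{\mathcal{D}}$ is a closed ($\Phi$-forward-invariant) set, and the whole content of the statement is the uniform convergence of orbits toward it. The natural reduction is to prove the estimate first on the dense class of finitely-supported rational measures and then pass to an arbitrary $\mu$ by the approximation lemma that produces $\nu=\sum_{i=1}^N a_i\delta_{p_i}$ with $d(\mu,\nu)<\varepsilon$, using that $\Phi$ is (Lipschitz, hence) uniformly continuous in the chosen metric so that closeness is preserved under iteration.

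First I would fix $\varepsilon>0$ and a measure $\mu$, and approximate it by $\nu=\sum_{i=1}^N a_i\delta_{p_i}$ with $a_i\in[0,1]\cap\mathbb{Q}$ and $d(\mu,\nu)$ small; pushing forward gives $\Phi^n(\nu)=\sum_{i=1}^N a_i\delta_{T^n(p_i)}$, which is again a finitely-supported rational-weight measure. Now the uniform attractor hypothesis supplies $n_0$ such that $d(T^n(x),\Lambda)<\varepsilon'$ for \emph{all} $x\in X$ and $n>n_0$; in particular each atom $T^n(p_i)$ lies within $\varepsilon'$ of some point $q_i\in\Lambda$, which I may take rational-free of constraint since $\mathcal{D}$ allows any $q_i\in\Lambda$. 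Forming $\eta=\sum_{i=1}^N a_i\delta_{q_i}\in\mathcal{D}$, part (ii) of Lemma~\ref{lema} (the weight-matched estimate) gives $d_P(\Phi^n(\nu),\eta)\le\max_i d(T^n(p_i),q_i)<\varepsilon'$, so $\Phi^n(\nu)$ is within $\varepsilon'$ of $\mathcal{D}$.

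To finish I would assemble the pieces through the triangle inequality: $d(\Phi^n(\mu),\overline{\mathcal{D}})\le d(\Phi^n(\mu),\Phi^n(\nu))+d(\Phi^n(\nu),\eta)$, where the first term is controlled because $\Phi^n$ does not expand distances too badly (for the Prokhorov metric $\Phi$ need not be a contraction in general, so the clean route is to choose the metric in which the estimate is uniform, or to absorb the distortion into the choice of the initial approximation). Choosing $\varepsilon'$ and the approximation error each below $\varepsilon/2$ then yields $d(\Phi^n(\mu),\overline{\mathcal{D}})<\varepsilon$ uniformly in $\mu$ for $n>n_0$, which is exactly the definition of a uniform attractor for $\Phi$.

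\textbf{The main obstacle} I anticipate is controlling the term $d(\Phi^n(\mu),\Phi^n(\nu))$ uniformly in $n$: unlike the point-attractor case (Proposition~\ref{ponto fixo para medida}), here $\Phi$ need not be a contraction, so one cannot simply iterate a Lipschitz bound without risking that the approximation error grows with $n$. The cleanest fix is to observe that the uniform-attractor estimate acts \emph{directly} on $\Phi^n(\mu)$ itself — every point of $X$, hence every point in the support of $\Phi^n(\mu)$, is $\varepsilon'$-close to $\Lambda$ for $n>n_0$ — so that $\Phi^n(\mu)$ can be approximated by an element of $\mathcal{D}$ \emph{without} passing through $\nu$ at all, replacing the two-step argument by a single application of Lemma~\ref{lema} combined with the dense-approximation lemma on $\Lambda$. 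This sidesteps the distortion issue entirely, and I expect the final write-up to take that more direct path.
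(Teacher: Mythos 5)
Your preferred route (the ``fix'' in your last paragraph) is correct, and it is genuinely different from --- and arguably cleaner than --- what the paper does. The paper argues exactly along your second and third paragraphs: it first proves the uniform estimate on the countable dense class $\mathcal{A}$ of Lemma~\ref{sepa} (rational weights, atoms in a dense subset of $X$) via Lemma~\ref{lema}, and then passes to a general $\mu$ by the triangle inequality. The quantifier trick that makes the paper's two-step argument work --- and which your main-line version misses --- is that the approximant $\nu\in\mathcal{A}$ is chosen \emph{after} $n$ is fixed: since the threshold $n_0$ is uniform over all of $\mathcal{A}$, for each $n>n_0$ one may pick $\nu=\nu(n)$ so close to $\mu$ that the single continuous map $\Phi^{n}$ keeps $\Phi^{n}(\mu)$ and $\Phi^{n}(\nu)$ within $\varepsilon$; no uniform-in-$n$ modulus of continuity is ever invoked. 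So your diagnosis of the obstacle is exactly right (for a general continuous $T$ there is no metric making $\{\Phi^n\}$ equicontinuous, and neither of the two vague remedies in your third paragraph works as stated), but the obstacle is curable inside the two-step scheme by letting the approximant depend on $n$. Your alternative repair avoids all of this: for $n>n_0$ the attractor hypothesis gives $T^{n}(X)\subseteq\Lambda_{\varepsilon'}$, hence $\Phi^{n}(\mu)(\Lambda_{\varepsilon'})=\mu(T^{-n}(\Lambda_{\varepsilon'}))=1$ for \emph{every} $\mu$, so $\Phi^{n}(\mu)$ is itself supported in a small neighborhood of $\Lambda$ and uniformity in $\mu$ is manifest; this also makes transparent that $\overline{\mathcal{D}}=\mathcal{P}(\Lambda)$ and that $\Phi^{n}(\mathcal{P}(X))$ eventually enters any neighborhood of it.

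One detail you should not wave at: Lemma~\ref{lema} applies only to \emph{finitely supported} measures, so it cannot be applied ``directly'' to $\Phi^{n}(\mu)$. You need a discretization step first, in the spirit of Lemma~\ref{delta grading}: cover the compact set $\Lambda$ by finitely many balls $B(q_i,\varepsilon')$ with $q_i\in\Lambda$, note that these balls with radius $2\varepsilon'$ cover $\Lambda_{\varepsilon'}$, disjointify their traces on $\Lambda_{\varepsilon'}$ into measurable cells $Q_i$, and set $\eta=\sum_i \Phi^{n}(\mu)(Q_i)\,\delta_{q_i}$. The same two-sided inclusion argument used in the proof of Lemma~\ref{lema} then gives $d_{P}(\Phi^{n}(\mu),\eta)\le 2\varepsilon'$, and a final rationalization of the weights (total variation dominates the Prokhorov distance) lands you in $\mathcal{D}$. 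With that supplement your direct argument is complete and uniform in $\mu$ by construction.
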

\begin{proof}
 By Lemma \ref{sepa}, we have that
 $$
   \mathcal{A}=\Big\{\sum_{i=1}^{k}\alpha_{i}\delta_{a_{i}}:\sum_{i=1}^{k}\alpha_{i}=1,
    \ \ a_{i}\in A, \ \ \alpha_{i}\in [0,1]\cap\mathbb{Q}\mbox { and } k\in\mathbb{N}\Big\}
 $$
is dense in $P(X)$.
 Using the Lemma \ref{lema}
$$
 \lim_{n\rightarrow\infty}d(\Phi^{n}(\nu),\mathcal{D})=0,
$$
uniformly, for all $\nu\in\mathcal{A}$ .
In fact, if we take $\varepsilon>0$ , there exists $n_{0}\in\mathbb{N}$ such that
$$
 n>n_{0}\Rightarrow d(T^{n}(a_{i}),\Lambda)<\varepsilon, \ \ \forall \ a_{i}\in A.
$$
Given $a_{i}\in A$, there exists $q_{i}\in \Lambda$, such that $d(T^{n}(a_{i}),q_{i})<\varepsilon$.
Hence if $\nu=\sum_{i=1}^{k}\alpha_{i}\delta_{a_{i}}$ and we consider
$\nu^{\prime}=\sum_{i=1}^{k}\alpha_{i}\delta_{q_{i}}$,
where $d(T^{n}(a_{i}),q_{i})<\varepsilon$, we see that, by Lemma \ref{lema},
$$
 d_{P}(\Phi^{n}(\nu),\nu^{\prime})<\min\{d(T^{n}(a_{i}),q_{i})\}<\max\{d(T^{n}(a_{i}),q_{i})\}<\varepsilon.
$$
Now we take $\mu\in P(X)$ and $\varepsilon>0$. We know that there exists $n_{0}\in\mathbb{N}$ such that
$$
 n>n_{0}\Rightarrow d_{P}(\Phi^{n}(\nu), \mathcal{D})<\varepsilon, \ \forall \nu\in\mathcal{A},
$$
then, using the continuity of $\Phi^{n}$, we have that there exists $\delta>0$ such that
$$
 d_{P}(\mu,\nu)<\delta\Rightarrow d_{P}(\Phi^{n}(\mu),\Phi^{n}(\nu))<\varepsilon.
$$
As $\mathcal{A}$ is dense in $X$, there exists $\nu\in \mathcal{A}$, such that $d_{P}(\nu,\mu)<\delta$.
Finally we get
$$
 n>n_{0}\Rightarrow d_{P}(\Phi^{n}(\mu),\mathcal{D})\leq d_{P}(\Phi^{n}(\nu),
\mathcal{D})+d_{P}(\Phi^{n}(\mu),\Phi^{n}(\nu))<2\varepsilon.
$$
We observe that the last inequality is independent of $\mu\in P(X)$.
\end{proof}

\begin{example}
Consider $X=[0,1]\times[0,1]$ and $T: X\rightarrow X$ given by $T(x,y)=(x,(\frac{1}{2}+\frac{1}{2}x)y)$,
then  $\Lambda=\{(x,y):x=1, \ \mbox{or} \ y=0\}$ is a uniform attractor to $T$.
In fact given
$(x,y)\in X$,
$$
 d(T^{n}(x,y),\Lambda)=d((x,\frac{(1+x)^{n}}{2^n}y),\Lambda)=\min\{1-x,\frac{(1+x)^{n}}{2^n}y\}.
$$

If we take $0<\varepsilon<1$, we have that
  $x\leq \varepsilon$ or $\varepsilon<x$.
If $\varepsilon<x$, then $1-x<\varepsilon$. If $x\leq \varepsilon$, then we can see that

$$
 \frac{(1+x)^{n}}{2^{n}}\leq\frac{(1+\varepsilon)^{n}}{2^{n}}\rightarrow0.
$$
It implies that there exists $n_{0}\in\mathbb{N}$ such that
$$
 n>n_{0}\Rightarrow \frac{(1+x)^{n}}{2^{n}}y\leq\frac{(1+\varepsilon)^{n}}{2^{n}}y<\varepsilon.
$$
Then we conclude that
$$
 n>n_{0}\Rightarrow d(T^{n}(x,y),\Lambda)=\min\{1-x,\frac{(1+x)^{n}}{2^n}y\}<\varepsilon .
$$

On the other hand, if we apply the Theorem \ref{teorema}, we get that the closure of
\begin{align*}
\mathcal{D}:&=\Big\{\sum_{i=1}^{k}\alpha_{i}\delta_{(x_{i},y_{i})}:\sum_{i=1}^{k}\alpha_{i}=1,\\&\
\ (x_{i},y_{i})=(x_{i},0) \ \mbox{or}\  (x_{i},y_{i})=(1,y_{i}), \ \ \alpha_{i}\in [0,1]\cap\mathbb{Q}\mbox { and } k\in\mathbb{N}\Big\},
\end{align*}
is a uniform attractor to $\Phi$.
\end{example}

\begin{example}
(Uniformly hyperbolic attractor) Consider the solid torus $\mathcal{T}=S^{1}\times D^{2}$, where
$S^{1}=[0,1]  \mod 1$ and $D^{2}=\{(x,y)\in\mathbb{R}^{2}:x^{2}+y^2\leq1\}$. We fix $\lambda\in(0\frac{1}{2})$
and define $T:\mathcal{T}\rightarrow\mathcal{T}$ by
$$T(\phi,x,y)=(2\phi,\lambda x +\frac{1}{2}\cos(2\pi\phi),\lambda y+\frac{1}{2}\sin(2\pi\phi) ).$$
The map is injective and stretches by a factor of $2$ in the $S^{1}$-direction, contracts
by a factor of  $\lambda$ in the $D^{2}$-direction,
and wraps the image twice inside $\mathcal{T}$.

The image $F(\mathcal{T})$ is contained in the interior $int(\mathcal{T})$ and
$F^{n+1}(\mathcal{T})\subset int(F^{n}(\mathcal{T}))$. A slice $F(\mathcal{T})\cap \{\phi=c\}$
consists of two disks of radius $\lambda$ centered at diametrically opposite points at distance
$\frac{1}{2}$ from the center of the slice. A slice $F^{n}(\mathcal{T})\cap \{\phi=c\}$
consists of $2^{n}$-disks of radius $\lambda ^{n}$: two disks inside  each of $2^{n-1}$ disks of
$F^{n-1}(\mathcal{T})\cap \{\phi=c\}$.

The set $S=\cap_{n=0}^{\infty} F^{n}(\mathcal{T})$ is called a solenoid. It is a closed
$F$-invariant subset of $\mathcal{T}$ on which $F$
is bijective. The solenoid is a uniform attractor for $F$. Moreover $S$ is a hyperbolic set,
then $S$ is  an example of an  uniformly hyperbolic attractor.

Then, by Theorem \ref{teorema}, the closure of
$$\mathcal{D}:=\Big\{\sum_{i=1}^{k}\alpha_{i}\delta_{q_{i}}:\sum_{i=1}^{k}\alpha_{i}=1,\
\ q_{i}\in S, \  \alpha_{i}\in [0,1]\cap\mathbb{Q}\mbox { and } k\in\mathbb{N}\Big\},$$
is a uniform attractor for $\Phi$.
\end{example}

\section{Topological entropy}\label{entropia topologica}
Here we get a very interesting connection between the topological entropy of the map $T$ and the
topological entropy

We briefly recall the definition of the topological entropy.
\begin{definition}
Let $T:X \rightarrow X$ a continuous map. A subset $A\subset X$ is said $(n,\varepsilon)-$separeted
if any two distinct points $x,y$ satisfy
$$
 d_n(x,y):=\max_{0\leq k\leq n-1} d(T^k(x),T^k(y))\geq \varepsilon.
$$
Each $d_n$ is a metric on $X$, moreover the $d_i$ are all equivalent metrics.
 \end{definition}
Let us denote by $sep(T, n, \vep)$ the maximal cardinality of a $(n, \vep)-$ separated set.
Introducing
$$
 \displaystyle h_{\varepsilon}(T)=\overline{\displaystyle\lim_{n\rightarrow\infty}}\frac{1}{n}\log \ sep(T,n,\varepsilon)
$$
the topological entropy of the map $T$ is then given by
$$
 h(T)=\displaystyle\lim_{\varepsilon\rightarrow 0} h_{\varepsilon}(T).
$$

 Now we can state some results about the topological entropy of the map $\Phi$.

\begin{lemma}
Let $T:X\rightarrow X$ be a continuous map such that $T:X\rightarrow T(X)$ is a homeomorphism .
If  $\displaystyle\lim_{n\rightarrow\infty} T^{n}(x)=p$,
 for all $x\in X$, then $h(T)=0$.
\end{lemma}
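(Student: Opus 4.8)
The plan is to show that the convergence $T^n \to p$ uniformly (which we already have by Lemma~\ref{entropia}) forces every $(n,\varepsilon)$-separated set to have cardinality bounded independently of $n$, whence $h_\varepsilon(T) = 0$ for each $\varepsilon > 0$ and thus $h(T) = 0$. The intuition is that if all orbits collapse toward a single point $p$, then after finitely many steps any two orbits are so close together that they can no longer be distinguished at scale $\varepsilon$ in the Bowen metric $d_n$; the only separation that survives comes from the finitely many initial coordinates, and a compact space admits only finitely many points that are pairwise $\varepsilon$-separated in a fixed metric.

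\textbf{First I would fix $\varepsilon > 0$ and exploit uniform convergence.} By Lemma~\ref{entropia}, $T^n \to F$ uniformly where $F \equiv p$, so there exists $N_0 \in \mathbb{N}$ such that $d(T^k(x), p) < \varepsilon/2$ for all $x \in X$ and all $k \geq N_0$. The triangle inequality then gives $d(T^k(x), T^k(y)) < \varepsilon$ for every pair $x, y \in X$ whenever $k \geq N_0$. Consequently, for any $n > N_0$ the Bowen distance simplifies:
$$
d_n(x,y) = \max_{0 \leq k \leq n-1} d(T^k(x), T^k(y)) = \max_{0 \leq k \leq N_0 - 1} d(T^k(x), T^k(y)) = d_{N_0}(x,y),
$$
because the coordinates with index $k \geq N_0$ contribute terms strictly smaller than $\varepsilon$ and hence never realize the maximum once any earlier coordinate forces it to be $\geq \varepsilon$ (and if no coordinate reaches $\varepsilon$, the points are not separated at all). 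The upshot is that the notion of $(n,\varepsilon)$-separation stabilizes: a set is $(n,\varepsilon)$-separated for some $n > N_0$ if and only if it is $(N_0, \varepsilon)$-separated.

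\textbf{Next I would bound the cardinality.} Since $X$ is compact and $d_{N_0}$ is a metric on $X$ (equivalent to $d$, as noted in the definition), the space $(X, d_{N_0})$ is totally bounded, so any $\varepsilon$-separated set with respect to $d_{N_0}$ is finite; call its maximal cardinality $M(\varepsilon) < \infty$. By the stabilization above, $sep(T, n, \varepsilon) \leq M(\varepsilon)$ for all $n > N_0$, a bound independent of $n$. Therefore
$$
h_\varepsilon(T) = \varlimsup_{n \to \infty} \frac{1}{n} \log sep(T, n, \varepsilon) \leq \varlimsup_{n \to \infty} \frac{1}{n} \log M(\varepsilon) = 0.
$$
Letting $\varepsilon \to 0$ yields $h(T) = \lim_{\varepsilon \to 0} h_\varepsilon(T) = 0$, as claimed.

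\textbf{The main obstacle} I anticipate is purely bookkeeping rather than conceptual: one must argue carefully that coordinates beyond index $N_0$ cannot increase the Bowen maximum past $\varepsilon$, so that they are irrelevant to whether two points are $\varepsilon$-separated. This is handled by the observation that all those late coordinates are bounded by $\varepsilon$, hence a pair is $\varepsilon$-separated at time $n$ precisely when it is already separated among the first $N_0$ coordinates. Everything else reduces to the standard fact that a compact metric space has only finitely many pairwise $\varepsilon$-separated points, which bounds the separated-set count uniformly in $n$ and kills the exponential growth rate.
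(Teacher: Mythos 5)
Your proposal is correct and follows essentially the same route as the paper: both invoke Lemma~\ref{entropia} to get uniform convergence $T^n \to p$, deduce that for $n$ beyond the uniform-convergence time $N_\varepsilon$ the late coordinates cannot contribute $\varepsilon$-separation, so $(n,\varepsilon)$-separated sets coincide with $(N_\varepsilon,\varepsilon)$-separated sets, whose cardinality is finite and independent of $n$, killing $h_\varepsilon(T)$. One cosmetic caveat: your displayed identity $d_n(x,y)=d_{N_0}(x,y)$ is stronger than what is true (a late coordinate below $\varepsilon$ may still realize the maximum); the correct and sufficient statement, which you do state and use, is the equivalence $d_n(x,y)\geq\varepsilon \Leftrightarrow d_{N_0}(x,y)\geq\varepsilon$ for $n\geq N_0$.
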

\begin{proof}
  We know, by Lemma \ref{entropia}, if $\displaystyle\lim_{n\rightarrow\infty} T^{n}(x)=p$,
  then the sequence $\{T^{n}\}_{n\in\mathbb{N}}$ converges uniformly to the constante map $G\equiv p$.
  Let us take  $A\subset X$  $(N_{\varepsilon},\varepsilon)$-separated with maximum cardinality
  and observe that $A$ is $(n, \varepsilon)$-separated  for all $n\geq N_{\varepsilon}$.
  Moreover if $B$ is $(n, \varepsilon)$-separated, the cardinality of $B$ is at most equals to the cardinality of $A$. Hence
$$
 \displaystyle h_{\varepsilon}(T)=\overline{\displaystyle\lim_{n\rightarrow\infty}}\frac{1}{n}\log \ sep(T,n,\varepsilon)=0,
$$
which implies
$$
 h(T)=\displaystyle\lim_{\varepsilon\rightarrow 0} h_{\varepsilon}(T)=0.
$$
\end{proof}

If we apply the Lemma \ref{ponto fixo para medida} and after apply the Proposition \ref{entropia} we can prove the following:

\begin{theorem}\label{corolario}
If $\displaystyle\lim_{n\rightarrow\infty}T^{n}(x)=p$, then $h(\Phi)=0$, where $h(\Phi) $ is the topological  entropy of $\Phi$.
\end{theorem}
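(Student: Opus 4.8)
The plan is to run, for the map $\Phi$ on the compact metric space $\cP(X)$, exactly the argument that the preceding lemma uses to show $h(T)=0$. The point is that the hypothesis $\lim_{n\to\infty}T^n(x)=p$ transfers, via the earlier results, into the statement that $\{\Phi^n\}$ collapses uniformly onto a single point $\delta_p$ of $\cP(X)$; once that is in hand, the count of separated sets for $\Phi$ stabilizes in $n$ and the entropy vanishes.

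First I would record that $\cP(X)$ is a compact metric space (Lemma \ref{sepa}) and that $\Phi$ is continuous, so that $h(\Phi)$ is well defined through $(n,\vep)$-separated sets for the induced metric $d$ on $\cP(X)$. By Proposition \ref{ponto fixo para medida}, $\lim_{n\to\infty}\Phi^n(\mu)=\delta_p$ for every $\mu\in\cP(X)$; that is, the sequence $\{\Phi^n\}$ converges pointwise to the constant map $\mathcal F\colon\cP(X)\to\cP(X)$, $\mathcal F(\mu)=\delta_p$. The analog of Lemma \ref{entropia}, applied on the compact space $\cP(X)$ to the continuous maps $\Phi^n$, then upgrades this to uniform convergence: given $\vep>0$ there is $N_\vep\in\N$ with $d(\Phi^k(\mu),\delta_p)<\vep/2$ for all $\mu\in\cP(X)$ and all $k\ge N_\vep$.

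With uniform convergence available, the entropy computation is a verbatim transcription of the lemma above. For $k\ge N_\vep$ the triangle inequality gives $d(\Phi^k(\mu),\Phi^k(\nu))\le d(\Phi^k(\mu),\delta_p)+d(\delta_p,\Phi^k(\nu))<\vep$, so in $d_n(\mu,\nu)=\max_{0\le k\le n-1}d(\Phi^k(\mu),\Phi^k(\nu))$ no index $k\ge N_\vep$ can force a separation of size $\ge\vep$. Hence for every $n\ge N_\vep$ a set is $(n,\vep)$-separated if and only if it is $(N_\vep,\vep)$-separated, so $sep(\Phi,n,\vep)=sep(\Phi,N_\vep,\vep)$ is constant, and finite by compactness, for all large $n$. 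Therefore $h_\vep(\Phi)=\overline{\lim_{n\to\infty}}\,\frac1n\log sep(\Phi,n,\vep)=0$, and letting $\vep\to 0$ yields $h(\Phi)=0$.

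The step I expect to be the main obstacle is the passage from pointwise to uniform convergence of $\{\Phi^n\}$, i.e.\ justifying the analog of Lemma \ref{entropia} for $\Phi$. That lemma was stated under the hypothesis that $T$ restricts to a homeomorphism onto its image, so to reuse it I must either verify the corresponding property for $\Phi$ on $\cP(X)$ or, more cleanly, observe that uniform convergence of $\Phi^n$ to the \emph{constant} map $\mathcal F$ follows from pointwise convergence together with the compactness of $\cP(X)$ alone, which is all the earlier proof actually exploits. Everything after that, namely the stabilization of the separated-set count and the resulting vanishing of $h_\vep(\Phi)$, then goes through exactly as in the argument already used to prove $h(T)=0$.
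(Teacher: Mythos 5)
Your proposal is, in outline, exactly the paper's own proof: the authors obtain Theorem \ref{corolario} by citing Proposition \ref{ponto fixo para medida} (every $\Phi$-orbit converges to $\delta_p$) and then running, for $\Phi$ on the compact space $\mathcal{P}(X)$, the separated-set argument of the preceding lemma on $h(T)=0$. The problem is the step you yourself flag as the main obstacle and then resolve by assertion: the claim that pointwise convergence of $\{\Phi^n\}$ to the constant map $\mathcal{F}\equiv\delta_p$, together with compactness of $\mathcal{P}(X)$, already gives uniform convergence. That implication is false. Pointwise convergence of continuous maps to a constant on a compact space does not imply uniform convergence (a drifting bump on $[0,1]$ converges pointwise to $0$ while keeping supremum $1$), and the failure occurs inside the very hypotheses of the theorem: let $T$ be a parabolic circle homeomorphism, for instance $x\mapsto x+\tfrac14\sin^{2}(\pi x) \ (\mathrm{mod}\ 1)$, which has the unique fixed point $p=0$ and satisfies $T^{n}(x)\to p$ for every $x$. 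Here $T:X\to T(X)=X$ is a homeomorphism, so Lemma \ref{entropia}, Proposition \ref{ponto fixo para medida} and Theorem \ref{corolario} all apply; but $T^{n}(X)=X$ for every $n$, so $\sup_{x\in X}d(T^{n}(x),p)$ is a fixed positive constant and $T^{n}$ does not converge uniformly. Likewise $\Phi$ is then a surjective homeomorphism of $\mathcal{P}(X)$, so $\sup_{\mu}d(\Phi^{n}(\mu),\delta_p)$ does not tend to $0$, and your conclusion that $sep(\Phi,n,\varepsilon)$ stabilizes for $n\ge N_\varepsilon$ breaks down: in this example the separated-set counts grow without bound (only subexponentially).

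To be fair, this is not a defect you introduced: it is precisely the gap in the paper's own Lemma \ref{entropia}, whose proof also asserts that pointwise convergence on a compact space is automatically uniform, so you have reconstructed the authors' argument faithfully, flaw included. The theorem itself is true, but it needs a mechanism other than uniform collapse. A repair that stays within the paper's toolkit: first, Proposition \ref{ponto fixo para medida} survives without Lemma \ref{entropia}, since by dominated convergence $\int_X g\, d(\Phi^{n}(\mu))=\int_X g\circ T^{n}\, d\mu\to g(p)$ for every $g\in C(X)$. Second, since every $\Phi$-orbit converges to the fixed point $\delta_p$, every $\omega$-limit set of $\Phi$ equals $\{\delta_p\}$, so $\delta_p$ is the unique recurrent point of $\Phi$; by Poincar\'e recurrence any $\Phi$-invariant probability measure on $\mathcal{P}(X)$ must then be $\delta_{\delta_p}$, whose metric entropy is $0$, and the variational principle (valid because $\mathcal{P}(X)$ is compact metric and $\Phi$ continuous) gives $h(\Phi)=0$. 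This closes the argument at exactly the point where the uniform-convergence step cannot.
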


\begin{corollary}
If $T$ is C-Lipschitz with $C<1$, then $h(\Phi)=0$.
\end{corollary}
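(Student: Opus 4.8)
The plan is to reduce the statement directly to Theorem~\ref{corolario}, whose hypothesis is that $\lim_{n\to\infty}T^n(x)=p$ for every $x\in X$. First I would observe that a $C$-Lipschitz map with $C<1$ is a contraction of the compact metric space $X$, and that compactness gives completeness. By the Banach contraction principle $T$ then has a (unique) fixed point $p$, and iterating the Lipschitz estimate anchored at $p$ gives
$$
 d(T^n(x),p)=d\big(T^n(x),T^n(p)\big)\leq C^n\,d(x,p)\to 0
$$
as $n\to\infty$, for every $x\in X$. This is exactly the hypothesis required by Theorem~\ref{corolario}.

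Second, once this pointwise convergence is in hand, the conclusion is immediate: Theorem~\ref{corolario} asserts precisely that $\lim_{n}T^n(x)=p$ forces $h(\Phi)=0$. The only genuinely new ingredient the corollary needs beyond the preceding material is therefore the contraction estimate displayed above; the passage from convergence of orbits to vanishing entropy of $\Phi$ — through the uniform convergence of Lemma~\ref{entropia}, the limit $\Phi^n(\mu)\to\delta_p$ of Proposition~\ref{ponto fixo para medida}, and the entropy argument of Theorem~\ref{corolario} — has already been assembled.

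I do not anticipate a serious obstacle, but one point deserves care. Theorem~\ref{corolario} and the lemmas supporting it are phrased for maps that are homeomorphisms onto their image $T(X)$, whereas a contraction with $C<1$ need not be injective (a constant map is the degenerate case). The cleanest resolution is to note that the entropy estimate uses only the uniform convergence $T^n\to p$, which follows from the contraction property alone and does not rely on $T$ being injective; invoking Theorem~\ref{corolario} through its stated hypothesis $\lim_{n}T^n(x)=p$ is therefore legitimate even when $T$ fails to be a homeomorphism onto its image.
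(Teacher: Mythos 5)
Your proof is correct, but it takes a different route from the paper's. The paper never verifies the hypothesis of Theorem~\ref{corolario} for $T$ itself; instead it lifts the contraction to the space of measures: by Proposition~\ref{lipschitz}, $\Phi$ is $C$-Lipschitz with $C<1$ on $\mathcal{P}(X)$, hence a contraction of a compact metric space, so every $\Phi$-orbit converges to the fixed point $\delta_p$, and the zero-entropy lemma is then applied directly to $\Phi$ acting on $\mathcal{P}(X)$. You work downstairs instead: Banach's principle in the compact (hence complete) space $X$ gives $d(T^n(x),p)\leq C^n d(x,p)\to 0$, which is exactly the hypothesis of Theorem~\ref{corolario}, and that theorem does the rest. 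What each buys: your argument needs nothing about how Lipschitz constants pass to $\mathcal{P}(X)$, so it is independent of Proposition~\ref{lipschitz} and of the choice of metric on $\mathcal{P}(X)$; the paper's argument, by contrast, exhibits $\Phi$ itself as a contraction, a stronger and reusable fact (it yields convergence $\Phi^n(\mu)\to\delta_p$ at a geometric rate without passing through Proposition~\ref{ponto fixo para medida}). Your closing remark about injectivity is a genuine point in your favor: the supporting results (Lemma~\ref{entropia}, Proposition~\ref{ponto fixo para medida}, and the unnumbered zero-entropy lemma) are stated for maps that are homeomorphisms onto their image, a property a contraction need not have --- and which $\Phi$ need not have either, so the paper's own proof silently carries the same gap. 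Your fix, namely that the whole chain only uses uniform convergence of the iterates, which for a contraction is immediate from $d(T^n(x),p)\leq C^n\,\mathrm{diam}(X)$, is the right one and repairs both versions of the argument.
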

\begin{proof}
 As $T$ is C-Lipschitz, then $\Phi$ is C-Lipschitz, with $C<1$. Then we have that
$\displaystyle\lim_{n\rightarrow\infty}\Phi^{n}(\mu)=\delta_{p}$, where $p$ is the fixed point for $T$,
 for all $\mu\in\mathcal{P}(X)$. Hence, by Lema \ref{entropia}
$h(\Phi)=0$.
\end{proof}
\begin{remark}
As we proved in Lemma \ref{lipschitz}, if $T$ is $C$-Lipschitz, then $\Phi$ is $C$-Lipschitz.
Hence if $C=1$, then $\Phi$ is non-expansive
and it implies that $h(\Phi)=0$.
\end{remark}
\begin{example}
 Consider the map $T:\mathbb{S}^{1}\rightarrow\mathbb{S}^{1}$ given by $T(x)=x+\alpha$,
 $\alpha $ irrational, then $h(\Phi)=0$. In fact, as $T$ is an isometry we have
 that $\Phi$ is 1-Lipschitz , then $h(\Phi)=0$.
\end{example}

\begin{definition} The set of probability measures supported on a finite set is given by the union
$\mathcal{D}=\cup_{n\geq1}\mathcal{D}_{n}$, where
$$
 \mathcal{D}_{n}=\Big\{\mu=\sum_{i=1}^{n}p_{i}\delta_{x_{i}}:(p_1,...,p_n)\in \mathbb{R}^n,\
    \sum_{i=1}^{n}p_i=1\mbox{ and } x_{i}\in X \Big\},
$$
and  for a fixed $p=(p_1,...,p_n)\in \mathbb{R}^n$ such that $ \sum_{i=1}^{n}p_i=1$ we define the set
$$
 \mathcal{D}_{n}(p)=\Big\{\mu=\sum_{i=1}^{n}p_{i}\delta_{x_{i}}:  x_{i}\in X \Big\}.
$$
\end{definition}

We notice that is possible to make a copy of the space $X$ in the  $\mathcal{P}(X)$ as follows:
$$
 j: X\rightarrow \mathcal{D}_{1}\subset\mathcal{P}(X)
$$
$$
 x\mapsto \delta_{x}.
$$
If we consider $\mathcal{D}_{1}$, we notice that $\Phi(\mathcal{D}_{1})=\mathcal{D}_{1}$, i.e.,
 $\mathcal{D}_{1} $ is $\Phi$-invariant.

\begin{lemma}(See \cite{Infinite  dimensional Analysis} and \cite{Gigli})
$j $ is a homeomorphism onto $\mathcal{D}_{1}$. If we consider the Wasserstein distance, $j $ is an isometry.
\end{lemma}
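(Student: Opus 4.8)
The plan is to prove that the map $j\colon X \to \mathcal{D}_1$, $x \mapsto \delta_x$, is a homeomorphism onto its image, and an isometry when $\mathcal{P}(X)$ carries the Wasserstein distance. First I would check that $j$ is a continuous bijection onto $\mathcal{D}_1 = \{\delta_x : x \in X\}$. Injectivity is immediate: if $\delta_x = \delta_y$ then for any continuous $f$ we have $f(x) = \int f \, d\delta_x = \int f \, d\delta_y = f(y)$, and since continuous functions separate points of the metric space $X$, we get $x=y$. Surjectivity onto $\mathcal{D}_1$ holds by definition of the target. Continuity of $j$ follows from the definition of the weak-$\ast$ metric: for $x_n \to x$ in $X$ and any continuous $g_i\colon X \to [0,1]$, continuity of $g_i$ gives $g_i(x_n) \to g_i(x)$, hence
$$
 d(\delta_{x_n}, \delta_x) = \sum_{i=1}^{\infty} \frac{1}{2^i}\big|g_i(x_n) - g_i(x)\big| \to 0
$$
by dominated convergence of the series (each term is bounded by $2^{-i}$).

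Next I would argue that $j^{-1}$ is continuous, which is where compactness does the work. Since $X$ is compact and $j$ is a continuous bijection onto $\mathcal{D}_1$, it suffices to know that $\mathcal{D}_1$ is Hausdorff (it is, being a subspace of the metric space $\mathcal{P}(X)$); then the standard fact that a continuous bijection from a compact space to a Hausdorff space is a homeomorphism applies directly. This gives the first assertion without any explicit estimate on $j^{-1}$.

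For the isometry claim under the Wasserstein metric, I would compute $W_p(\delta_x, \delta_y)$ directly from the definition. The only transport plan $\Pi$ from $\delta_x$ to $\delta_y$ is the product $\delta_{(x,y)} = \delta_x \times \delta_y$ on $X \times X$, since its marginals must be $\delta_x$ and $\delta_y$ and a Dirac mass on each factor forces $\Pi$ to concentrate at $(x,y)$. Hence the infimum in the definition of $W_p$ is attained at this unique plan and
$$
 W_p(\delta_x, \delta_y) = \Big(\int_{X \times X} d^p(u,v)\, d\delta_{(x,y)}\Big)^{1/p} = \big(d^p(x,y)\big)^{1/p} = d(x,y),
$$
so $j$ preserves distances exactly. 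I would then note that an isometry is in particular a homeomorphism onto its image, which reconfirms the first part in the Wasserstein setting.

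The main obstacle is essentially bookkeeping rather than depth: one must be careful that the uniqueness of the transport plan between two Dirac masses is genuinely forced (it is, because any coupling of point masses is a point mass), and that the weak-$\ast$ metric computation uses an enumerable dense family $\{g_i\}$ so that convergence in the metric is equivalent to testing against all continuous functions. Since Lemma \ref{lema1} already identifies all three metrics with the weak topology, the homeomorphism statement is metric-independent, and only the isometry statement is specific to $W_p$; I would lean on \cite{Infinite  dimensional Analysis} and \cite{Gigli} for any standard facts about these metrics invoked along the way.
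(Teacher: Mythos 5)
Your proof is correct, but note that the paper does not actually prove this lemma at all: it simply cites \cite{Infinite  dimensional Analysis} and \cite{Gigli} and moves on, so there is no internal argument to compare yours against. Your write-up supplies exactly the standard facts those references contain, and every step is sound: injectivity via separation of points by continuous functions (e.g.\ $z\mapsto d(z,x)$), continuity of $j$ via the series form of the weak-$\ast$ metric with the tail controlled by $\sum_i 2^{-i}$, continuity of $j^{-1}$ from the compact-to-Hausdorff argument, and the isometry identity $W_p(\delta_x,\delta_y)=d(x,y)$ from the fact that the unique coupling of two Dirac masses is $\delta_{(x,y)}$ (its marginal conditions force $\Pi\bigl((X\setminus\{x\})\times X\bigr)=\Pi\bigl(X\times(X\setminus\{y\})\bigr)=0$). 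One small economy you partly noticed yourself: once the Wasserstein computation is done, the homeomorphism claim follows for free, since a surjective isometry onto $\mathcal{D}_1$ is a homeomorphism for the Wasserstein metric, and by Lemma \ref{lema1} all three metrics induce the same (weak) topology on $\mathcal{P}(X)$; so the compactness argument, while correct, is not strictly needed. Keeping it does no harm and makes the two assertions of the lemma logically independent, which is a reasonable stylistic choice.
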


\begin{lemma}
Let $S:Z\rightarrow Z$ a homeomorphism of a compact metric space.
If $F\subset Z$ is a closed invariant subset of $X$  then
$$
 h(S|_{F})\leq h(S).
$$
\end{lemma}
\begin{proof}
See \cite{Brin}.
\end{proof}

\begin{proposition}
$h(\Phi)\geq h(T)$.
\end{proposition}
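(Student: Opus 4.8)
The plan is to realize $(X,T)$ as an invariant subsystem of $(\mathcal{P}(X),\Phi)$ through the embedding $j$ and then to invoke the two preceding lemmas. First I would record the key computation $\Phi(\delta_x)=\delta_{T(x)}$: for any Borel set $A$ one has $\Phi(\delta_x)(A)=\delta_x(T^{-1}(A))$, which equals $1$ exactly when $T(x)\in A$, so indeed $\Phi(\delta_x)=\delta_{T(x)}$. In terms of the embedding this reads $j\circ T=\Phi\circ j$, i.e. $j$ intertwines $T$ with the restriction $\Phi|_{\mathcal{D}_1}$.

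Next I would check that $\mathcal{D}_1$ is a closed $\Phi$-invariant set: it is the continuous image $j(X)$ of the compact space $X$, hence compact and therefore closed in $\mathcal{P}(X)$, and we already observed that $\Phi(\mathcal{D}_1)=\mathcal{D}_1$. Thus the lemma on restriction to closed invariant subsets applies and yields $h(\Phi|_{\mathcal{D}_1})\leq h(\Phi)$.

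Finally, since $j$ is a homeomorphism onto $\mathcal{D}_1$ (the preceding lemma), the identity $j\circ T=\Phi|_{\mathcal{D}_1}\circ j$ exhibits $T$ and $\Phi|_{\mathcal{D}_1}$ as topologically conjugate; as topological entropy is a conjugacy invariant, $h(\Phi|_{\mathcal{D}_1})=h(T)$. Combining the two relations gives $h(T)=h(\Phi|_{\mathcal{D}_1})\leq h(\Phi)$, as desired.

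The point to be careful about is the hypothesis under which the restriction lemma is stated: it requires $\Phi$ to be a homeomorphism, which holds provided $T$ is a homeomorphism (so that $\Phi$ is one as well). If one prefers to avoid invoking that lemma, the inequality can be obtained directly from the definition of entropy. Using the Wasserstein metric $j$ is an isometry, so any $(n,\varepsilon)$-separated set $A\subset X$ for $T$ maps to an $(n,\varepsilon)$-separated set $j(A)$ for $\Phi$, since $W(\Phi^{k}(\delta_x),\Phi^{k}(\delta_y))=d(T^{k}(x),T^{k}(y))$ for every $k$. Hence $sep(\Phi,n,\varepsilon)\geq sep(T,n,\varepsilon)$, giving $h_{\varepsilon}(\Phi)\geq h_{\varepsilon}(T)$ and, letting $\varepsilon\to 0$, the claim. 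I expect the only genuinely delicate step to be making explicit that entropy is a conjugacy invariant (equivalently, that the isometry $j$ transports separated sets), the rest being bookkeeping with the already established properties of $j$ and $\mathcal{D}_1$.
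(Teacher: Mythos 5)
Your main argument is exactly the paper's proof: the intertwining $j\circ T=\Phi\circ j$, the $\Phi$-invariance of $\mathcal{D}_1$, conjugacy invariance of entropy, and monotonicity of entropy under restriction to a closed invariant subset, giving $h(T)=h(\Phi|_{\mathcal{D}_1})\leq h(\Phi)$. You are, however, more careful than the paper on two points, and one of them matters. First, you verify that $\mathcal{D}_1=j(X)$ is compact, hence closed, which the restriction lemma requires and the paper leaves implicit. Second, and more substantively, you notice that the paper's restriction lemma is stated only for homeomorphisms $S:Z\to Z$, whereas $\Phi$ need not be a homeomorphism when $T$ is merely continuous; the paper's proof invokes the lemma without comment. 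Your fallback argument closes this gap cleanly and is in fact a genuinely more elementary route: since $j$ is an isometry for the Wasserstein metric and $W(\Phi^k(\delta_x),\Phi^k(\delta_y))=d(T^k(x),T^k(y))$, every $(n,\varepsilon)$-separated set for $T$ transports to an $(n,\varepsilon)$-separated set for $\Phi$, so $sep(\Phi,n,\varepsilon)\geq sep(T,n,\varepsilon)$ and the inequality follows directly from the definition, with no appeal to the restriction lemma or to conjugacy invariance. (The monotonicity of entropy under restriction does hold for arbitrary continuous maps, so the paper's argument is salvageable as stated, but your direct argument avoids having to know or prove that.) In short: same approach as the paper, plus a self-contained variant that is strictly more robust with respect to the hypotheses actually available.
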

\begin{proof}
 We know that $j\circ T(x)=\delta_{T(x)}=\Phi\circ j(x)$, i.e,
$$
 j\circ T=\Phi\circ j.
$$
Hence $T$ is topologically conjugated to $\Phi$ restricted to $\mathcal{D}_{1}$, which implies
$$h(\Phi)\geq h(\Phi|_{\mathcal{D}_{1}})=h(T),$$
because $\mathcal{D}_{1}$ is $\Phi$-invariant.
\end{proof}

We have another important relation between $h(T)$ and $h(\Phi)$. To prove this relation we need some results, which we will not prove.

\begin{lemma}
(Goodwin, 1971) Let X and Y compact Haussdorf spaces and let $T:X\rightarrow X$ and $S:Y\rightarrow Y$ continuous. Then
$$h(T\times S)=h(T)+h(S),$$
where h denotes the topological entropy and $T\times S:X\times Y\rightarrow X\times Y$ is  defined as
$$(T\times S)(x,y)=(T(x),S(y)), \mbox{ for } (x,y)\in X\times Y.$$
\end{lemma}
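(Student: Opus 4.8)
The plan is to prove this classical product formula within the separated-set framework introduced above, in the metric case (which is the one used in the applications), and then indicate the modification needed for general compact Hausdorff spaces. Equip $X\times Y$ with the maximum metric $\rho((x,y),(x',y'))=\max\{d_X(x,x'),d_Y(y,y')\}$. The single identity that drives everything is that the associated dynamical metrics satisfy
$$
\rho_n\big((x,y),(x',y')\big)=\max\big\{d^X_n(x,x'),\,d^Y_n(y,y')\big\},
$$
which is immediate by exchanging the two maxima in the definition of $d_n$. Everything else is bookkeeping built on this identity, and I would treat the two inequalities separately.

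For the lower bound $h(T\times S)\ge h(T)+h(S)$ I would multiply separated sets: if $A\subseteq X$ is $(n,\varepsilon)$-separated for $T$ and $B\subseteq Y$ is $(n,\varepsilon)$-separated for $S$, then any two distinct points of $A\times B$ differ in at least one coordinate, so by the identity their $\rho_n$-distance is at least $\varepsilon$; hence $A\times B$ is $(n,\varepsilon)$-separated for $T\times S$ and $sep(T\times S,n,\varepsilon)\ge sep(T,n,\varepsilon)\,sep(S,n,\varepsilon)$. For the upper bound $h(T\times S)\le h(T)+h(S)$ I would use that a maximum-cardinality $(n,\varepsilon/2)$-separated set is automatically $(n,\varepsilon/2)$-spanning: choosing such sets $E\subseteq X$, $F\subseteq Y$ and sending each point of any $(n,\varepsilon)$-separated set $C\subseteq X\times Y$ to a nearby point of $E\times F$ gives an injection, since two preimages would be $\rho_n$-closer than $\varepsilon$ by the triangle inequality for $d^X_n,d^Y_n$. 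This yields $sep(T\times S,n,\varepsilon)\le sep(T,n,\varepsilon/2)\,sep(S,n,\varepsilon/2)$; taking $\frac1n\log$, using that $\limsup$ of a sum is at most the sum of the $\limsup$s, and letting $\varepsilon\to0$ gives the upper bound.

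The step I expect to be the main obstacle is the lower bound, because $\limsup$ is not superadditive: from the product inequality one obtains only
$$
h_\varepsilon(T\times S)\ \ge\ h_\varepsilon(T)+\liminf_{n\to\infty}\tfrac1n\log sep(S,n,\varepsilon),
$$
and a priori the $\liminf$ on the right is strictly smaller than $h_\varepsilon(S)$. I would remove this defect by invoking the standard equivalence between the separated-set definition of entropy and the open-cover definition $h(T)=\sup_{\mathcal U}\lim_n\frac1n\log N\big(\bigvee_{k=0}^{n-1}T^{-k}\mathcal U\big)$, in which the $n$-limit genuinely exists because $\log N(\bigvee_{k<n}T^{-k}\mathcal U)$ is subadditive in $n$ (Fekete's lemma). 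Sandwiching $sep(S,n,\varepsilon)$ between two such cover-counts, for covers whose mesh and whose Lebesgue number are comparable to $\varepsilon$, forces both $\lim_{\varepsilon\to0}\limsup_n$ and $\lim_{\varepsilon\to0}\liminf_n$ of $\frac1n\log sep(S,n,\varepsilon)$ to equal the single number $h(S)$. Hence $\lim_{\varepsilon\to0}\liminf_n\frac1n\log sep(S,n,\varepsilon)=h(S)$, and letting $\varepsilon\to0$ in the displayed inequality gives $h(T\times S)\ge h(T)+h(S)$; combined with the upper bound this proves the equality.

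Finally, for the fully general compact Hausdorff setting stated in the lemma, where $X,Y$ may be non-metrizable and separated sets are unavailable, I would run the identical scheme with open covers in place of separated sets: product covers $\mathcal U\times\mathcal V$ are cofinal among the open covers of $X\times Y$ and satisfy
$$
\bigvee_{k<n}(T\times S)^{-k}(\mathcal U\times\mathcal V)=\Big(\bigvee_{k<n}T^{-k}\mathcal U\Big)\times\Big(\bigvee_{k<n}S^{-k}\mathcal V\Big).
$$
The upper bound is then immediate from the submultiplicativity $N(\mathcal A\times\mathcal B)\le N(\mathcal A)N(\mathcal B)$ of minimal subcover cardinalities together with cofinality, whereas the lower bound again needs the comparison idea above to overcome the fact that the reverse inequality $N(\mathcal A\times\mathcal B)\ge N(\mathcal A)N(\mathcal B)$ can fail at finite $n$. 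This is precisely the point at which the result is genuinely nontrivial, and the reason it is attributed to Goodwin rather than proved in one line.
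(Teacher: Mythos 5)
The paper never proves this lemma: it is imported verbatim from the literature, preceded by the sentence ``we need some results, which we will not prove,'' and attributed to Goodwyn's 1971 Transactions paper. So there is no internal proof to compare against, and your attempt has to be judged on its own merits. In the compact \emph{metric} case your argument is correct. The identity $\rho_n=\max\{d^X_n,d^Y_n\}$ for the max metric, the inequality $sep(T\times S,n,\varepsilon)\geq sep(T,n,\varepsilon)\,sep(S,n,\varepsilon)$ from products of separated sets, and the upper bound $sep(T\times S,n,\varepsilon)\leq sep(T,n,\varepsilon/2)\,sep(S,n,\varepsilon/2)$ via maximal-separated-hence-spanning sets are all sound (with the paper's convention that separated means $d_n\geq\varepsilon$, the injectivity argument does give a strict inequality $<\varepsilon$ and hence a contradiction). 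More importantly, you correctly isolate the one genuinely delicate point: since $h_\varepsilon$ is a $\limsup$, the product inequality only yields $h_\varepsilon(T)$ plus a $\liminf$, and your repair --- sandwiching $sep(S,n,\varepsilon)$ between $N\big(\bigvee_{k<n}S^{-k}\mathcal{V}\big)$ and $N\big(\bigvee_{k<n}S^{-k}\mathcal{U}\big)$ for a cover $\mathcal{V}$ with Lebesgue number at least $2\varepsilon$ and a cover $\mathcal{U}$ of mesh less than $\varepsilon$, where the cover counts have true limits by subadditivity --- is the standard and correct way to show that $\lim_{\varepsilon\to 0}\liminf_n\frac1n\log sep(S,n,\varepsilon)=h(S)$. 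Since every use of the lemma in the paper is for $T^{(n)}=T\times\cdots\times T$ acting on $X^n$ with $X$ compact metric (and $\mathcal{P}(X)$ itself compact metrizable), your proof covers everything the paper actually needs.

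There is, however, a genuine gap relative to the statement \emph{as written}, which asserts the result for compact Hausdorff spaces: your final paragraph for the non-metrizable case is a plan, not a proof. Cofinality of product covers and $N(\mathcal{A}\times\mathcal{B})\leq N(\mathcal{A})N(\mathcal{B})$ do give the upper bound there, but the lower bound is exactly the point where $N(\mathcal{A}\times\mathcal{B})\geq N(\mathcal{A})N(\mathcal{B})$ can fail, and the ``comparison idea'' you invoke has no substitute in that setting --- there are no separated sets and no Lebesgue numbers to sandwich with. You acknowledge this yourself, and indeed closing that gap is precisely Goodwyn's theorem. So as it stands your write-up proves the lemma for compact metric spaces and leaves the general case as a citation; that is a perfectly acceptable resolution for this paper, but you should say so explicitly, either by restating the lemma for compact metric spaces or by deferring the non-metrizable lower bound to Goodwyn.
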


\begin{theorem}
If $h(T)>0$ then $h(\Phi)=\infty$.
\end{theorem}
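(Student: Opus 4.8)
The plan is to exploit the embedding of $X^k$ into $\mathcal{P}(X)$ and the product formula for topological entropy. The key structural observation is that for each fixed $k$, the set $\mathcal{D}_k$ of measures supported on at most $k$ points contains a copy of $X^k$ on which $\Phi$ acts like the product map $T\times\cdots\times T$ ($k$ times). More precisely, I would fix a probability vector $p=(p_1,\dots,p_k)$ with all $p_i>0$ and consider the map
\begin{equation*}
 \iota_{p}\colon X^k\rightarrow \mathcal{D}_k(p)\subset\mathcal{P}(X),\qquad
 (x_1,\dots,x_k)\mapsto \sum_{i=1}^k p_i\,\delta_{x_i}.
\end{equation*}
By the change of variables (Lemma~\ref{Change of variables}) we have $\Phi\big(\sum_i p_i\delta_{x_i}\big)=\sum_i p_i\delta_{T(x_i)}$, so $\iota_p$ semiconjugates the product map $T^{(k)}:=T\times\cdots\times T$ to $\Phi$, i.e.\ $\Phi\circ\iota_p=\iota_p\circ T^{(k)}$.

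First I would check that $\iota_p$ is continuous and, on a suitable invariant piece, injective enough to transport entropy. The subtlety is that $\iota_p$ is \emph{not} globally injective: permuting the points $x_i$ when the weights coincide, or collisions $x_i=x_j$, collapse distinct tuples to the same measure. To avoid this I would restrict to the open dense subset $\Delta^{c}=\{(x_1,\dots,x_k): x_i\neq x_j \text{ for } i\neq j\}$ and choose the weights $p_i$ all distinct, so that from the measure $\sum p_i\delta_{x_i}$ one recovers both the support points and their labels (each $x_i$ is the unique atom of mass $p_i$); hence $\iota_p$ is injective there. Then I would argue that $\iota_p$ is a homeomorphism onto its image with continuous inverse on the relevant separated sets, so that an $(n,\varepsilon)$-separated set for $T^{(k)}$ maps to an $(n,\varepsilon')$-separated set for $\Phi$ for some $\varepsilon'>0$. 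This yields
\begin{equation*}
 h(\Phi)\ \geq\ h\big(\Phi|_{\overline{\iota_p(X^k)}}\big)\ \geq\ h\big(T^{(k)}\big)\ =\ k\,h(T),
\end{equation*}
where the last equality is the Goodwin product formula quoted above and the first inequality uses that entropy does not increase on closed invariant subsets.

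Since this holds for every $k\in\mathbb{N}$, and $h(T)>0$ by hypothesis, letting $k\to\infty$ forces $h(\Phi)=\infty$. The main obstacle I anticipate is the injectivity/properness of $\iota_p$ and the passage from ``$(n,\varepsilon)$-separated in $X^k$'' to ``$(n,\varepsilon')$-separated in $\mathcal{P}(X)$'': one must produce, uniformly in $n$, a definite lower bound $\varepsilon'$ on the $\Phi$-dynamical distance between images of two $T^{(k)}$-dynamically separated tuples. This is where I would use a Prokhorov- or Wasserstein-type estimate together with the fixed, distinct weights $p_i$: if two tuples are $\varepsilon$-separated in the $d_n$ metric of $T^{(k)}$, then at some iterate some coordinate pair is $\varepsilon$-far apart, and because that coordinate carries a fixed positive mass $p_i$, the corresponding pushforward measures differ by a controlled amount in the chosen metric on $\mathcal{P}(X)$. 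Verifying that this lower bound does not degenerate as $n$ grows is the crux; everything else is a routine combination of Lemma~\ref{Change of variables}, the lemma bounding $h(S|_F)\le h(S)$, and the Goodwin formula.
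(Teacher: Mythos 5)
Your overall strategy (embedding $X^k$ into $\mathcal{P}(X)$ via $\iota_p$, Goodwyn's product formula, and the restriction lemma) is the same as the paper's, but your handling of injectivity has a genuine gap. Choosing the weights $p_i$ merely \emph{distinct} is not enough, and restricting to the off-diagonal set $\Delta^{c}$ does not repair it. First, distinct weights do not prevent far-apart tuples from having equal, or arbitrarily close, images: with $p=(0.1,0.2,0.3,0.4)$ the tuples $(a,b,a,c)$ and $(c,b,c,a)$ both map to $0.4\,\delta_a+0.2\,\delta_b+0.4\,\delta_c$, no matter how far $a$ is from $c$; perturbing the repeated coordinates slightly produces pairs \emph{inside} $\Delta^{c}$ that are $\varepsilon$-separated in $X^k$ but whose images under $\iota_p$ are arbitrarily close in $\mathcal{P}(X)$. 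Hence the uniform lower bound $\varepsilon'$ that you yourself identify as the crux genuinely fails near the collision locus: an $(n,\varepsilon)$-separated set for $T^{(k)}$ need not map to an $(n,\varepsilon')$-separated set for $\Phi$ for any $\varepsilon'>0$. Second, $\Delta^{c}$ is neither compact nor closed, and it need not even be forward-invariant under $T^{(k)}$ when $T$ is not injective (distinct coordinates can collide after applying $T$), so neither the restriction lemma $h(S|_F)\leq h(S)$ (which requires $F$ closed and invariant) nor the identification of the entropy on $\Delta^{c}$ with $k\,h(T)$ is available. Finally, your displayed inequality $h\big(\Phi|_{\overline{\iota_p(X^k)}}\big)\geq h\big(T^{(k)}\big)$ goes the wrong way for a merely surjective semiconjugacy, which only yields $\leq$; to reverse it you need either genuine injectivity (a conjugacy) or a tool such as Bowen's fiber-entropy inequality (the fibers of $\iota_p$ are finite), which is not among the results you quote.

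The repair is precisely the paper's choice of weights: take $p_i = 2^{i-1}/(2^k-1)$, so that all \emph{subset sums} of the weights are distinct, by uniqueness of binary representations. Then $\iota_p$ is injective on all of $X^k$, collisions included: for any Borel set $A$, the mass $\iota_p(x)(A)$ determines exactly which indices $i$ satisfy $x_i\in A$, so the entire labelled tuple can be recovered from the measure. A continuous injection of the compact space $X^k$ into a Hausdorff space is automatically a homeomorphism onto its image, so $\iota_p(X^k)$ is a compact (hence closed) $\Phi$-invariant set on which $\Phi$ is topologically conjugate to $T^{(k)}$. The uniform separation you were trying to produce by hand then comes for free: by compactness of $\{(u,v)\in X^k\times X^k : d(u,v)\geq\varepsilon\}$ and injectivity, the continuous function $(u,v)\mapsto d_{P}\big(\iota_p(u),\iota_p(v)\big)$ has a positive minimum $\varepsilon'$ there, and this static estimate transfers to every iterate because $\Phi^m\circ\iota_p=\iota_p\circ (T^{(k)})^m$; in particular it cannot degenerate as $n$ grows. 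With this choice the chain
$$
 h(\Phi)\ \geq\ h\big(\Phi|_{\iota_p(X^k)}\big)\ =\ h\big(T^{(k)}\big)\ =\ k\,h(T)\ \longrightarrow\ \infty
$$
closes the proof exactly as you intended.
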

\begin{proof} Consider $n\in \N$ and $p\in\R^n$, such that $p=(p_1,..,p_n)$ and $p_i=\displaystyle\frac{2^{i-1}}{2^n-1}$,
and take the set $\mathcal{D}_{n}(p)$.
 We notice that $\mathcal{D}_{n}(p)$ is a closed subset of $\mathcal{P}(X)$, since $\mathcal{D}_{n}(p)=\sum_{i=1}^n p_i\mathcal{D}_{1}$. So we consider a map $\delta_p:X^n\rightarrow \mathcal{D}_{n}(p)$ defined as
$$\delta_p(x_1,...,x_n):=\sum_{i=1}^n p_i\delta_{x_i}.$$
We also consider the map $T^{(n)}:X^n\rightarrow X^n$ defined as
$$T^{(n)}(x_1,..,x_n):=(T(x_1),...,T(x_n))$$
It is not difficult to see that $\delta_p$ and $T^{(n)}$ are continuous,  and they satisfy
$$\Phi\circ\delta_p=\delta_p\circ T^{(n)}.$$
 We claim that $\delta_p$ is injective. In fact if $\delta_p(x)=\delta_p(y)$ and $y\not=x$, then
$$
\Big(\sum_{i=1}^n p_i\delta_{x_i}\Big)(A)=\Big(\sum_{i=1}^n p_i\delta_{y_i}\Big)(A),\  \mbox{for all open}\ A\subset X.
$$
So there is $k$ such that $x_k\not=y_k$. Take an open set $A$
(we can do it because we are assuming $X$ Haussdorf), such that $x_k\in A$
but $y_k\not\in A$.  We consider the set of points  $x_i\in A$, say $\{x_{i_{1}},...,x_{i_{l}}\}\subset\{x_1,...,x_n\}$ that set. Using
the same idea consider the set of points $y_j\in A$, say
$\{y_{j_{1}},...,y_{j_{s}}\}\subset\{y_1,...,y_n\}$ (observe that
$y_k\not\in\{y_{j_{1}},...,y_{j_{s}}\}$). Then we have that
$$
\sum_{t=1}^{l}\displaystyle\frac{2^{i_t-1}}{2^n-1}=
 \Big(\sum_{i=1}^n p_i\delta_{x_i}\Big)(A)=
 \Big(\sum_{i=1}^n p_i\delta_{y_i}\Big)(A)=
  \sum_{m=1}^{s}\displaystyle\frac{2^{j_m-1}}{2^n-1},
$$
and it implies
$$
\sum_{t=1}^{l}2^{i_t-1}=\alpha=\sum_{m=1,j_m\not=k+1}^{s}2^{j_m-1}.
$$
 Then we see that $\alpha \in\N$ has two different representations in base 2, it is a contradiction and we get $\delta_p$ injective. Clearly we have that $\delta_p$ is surjective, then $\delta_p$ is a bijection. As $\delta_p$ is continuous and $X^n$ and $\mathcal{D}_{n}(p)$ are compact (because $X$ is compact and $\mathcal{D}_{n}(p)$ is a closed subset of a compact set) we have that $\delta_p$ is a homeomorphism.  As $\Phi\circ\delta_p=\delta_p\circ T^{(n)}$ and $\delta_p$ is a homeomorphism, $\delta_p$ is a conjugation. Then
$$nh(T)=h(T^{(n)})=h(\Phi|_{\delta_p(X^n)})\leq h(\Phi),$$
as $h(T)>0$ we get the result.
\end{proof}
\begin{corollary}
If $T$ is continuous and $h(T)>0$ then $h(\Phi)=\infty$.
\end{corollary}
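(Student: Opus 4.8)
The plan is to reduce the final corollary to the theorem immediately preceding it, which asserts that $h(T)>0$ implies $h(\Phi)=\infty$. The corollary merely adds the hypothesis that $T$ is continuous, which is precisely the standing assumption under which the push-forward $\Phi$ is well-defined and continuous (as established in the earlier proposition citing \cite{Gigli}). So the real content is already contained in the previous theorem, and I would invoke it directly.

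First I would observe that the theorem's proof only used continuity of $T$ together with the compactness and Hausdorff properties of $X$, which are part of our standing hypotheses on $X$ (a compact metric space is automatically Hausdorff). In that proof one fixes an integer $n$, chooses the weight vector $p=(p_1,\dots,p_n)$ with $p_i=2^{i-1}/(2^n-1)$, and shows that the map $\delta_p\colon X^n\to\mathcal{D}_n(p)$ is a homeomorphism conjugating $T^{(n)}$ to $\Phi|_{\mathcal{D}_n(p)}$. The base-$2$ uniqueness argument guarantees injectivity, and compactness upgrades the continuous bijection to a homeomorphism. This yields the chain $nh(T)=h(T^{(n)})=h(\Phi|_{\delta_p(X^n)})\le h(\Phi)$ for every $n\in\N$.

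Next I would note that since $h(T)>0$ and the inequality $nh(T)\le h(\Phi)$ holds for all $n\in\N$, letting $n\to\infty$ forces $h(\Phi)=\infty$. Thus the corollary follows: under the continuity hypothesis on $T$, the push-forward $\Phi$ is a well-defined continuous self-map of the compact metric space $\mathcal{P}(X)$, the previous theorem applies verbatim, and its conclusion gives $h(\Phi)=\infty$.

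I do not expect any genuine obstacle here, as this corollary is essentially a restatement of the theorem with the background continuity hypothesis made explicit. The only point meriting a word of care is confirming that Goodwin's product formula $h(T^{(n)})=nh(T)$ applies, which requires $X$ (hence $X^n$) to be a compact Hausdorff space; this holds since our $X$ is a compact metric space. A concise proof would therefore read: by the preceding theorem, for each $n\in\N$ we have $nh(T)\le h(\Phi)$, so $h(T)>0$ gives $h(\Phi)=\infty$.
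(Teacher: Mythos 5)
Your proposal is correct and follows essentially the same route as the paper: the paper's own proof of the corollary simply observes that the argument of the preceding theorem never used that $T$ is a homeomorphism, so the conjugation $\delta_p\colon X^n\to\mathcal{D}_n(p)$ (a continuous bijection from a compact space, hence a homeomorphism) still gives $nh(T)=h(T^{(n)})=h(\Phi|_{\mathcal{D}_n(p)})\leq h(\Phi)$ for every $n$, forcing $h(\Phi)=\infty$. The only cosmetic difference is that the paper re-derives the equality $h(T^{(n)})=h(\Phi|_{\mathcal{D}_n(p)})$ by running the conjugation in both directions, whereas you invoke invariance of entropy under conjugacy directly; this is the same argument.
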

\begin{proof} We notice that we did not use the fact that $T$ is a homeomorphism. So we got a homeomorphism $\delta_p:X^n\rightarrow\mathcal{D}_{n}(p)$ such that $\Phi\circ\delta_p=\delta_p\circ T^{(n)}$. It implies that
$h(T^{(n)})\geq h(\Phi|_{\mathcal{D}_{n}(p)})$. By the other hand we have that  $\delta_{p}^{-1}\circ\Phi= T^{(n)}\delta_{p}^{-1}$. It implies that $h(T^{(n)})\leq h(\Phi|_{\mathcal{D}_{n}(p)})$. Finally we get
 $$nh(T)=h(T^{(n)})=h(\Phi|_{\mathcal{D}_{n}(p)})\leq h(\Phi).$$
\end{proof}

\begin{example}
Let $\mathbb{S}^1=\mathbb{R}\slash\mathbb{Z}$ and
consider the map $\phi_d:\mathbb{S}^1\rightarrow\mathbb{S}^1$ defined by
$$\phi_d(x)=dx\ \mbox{mod}\ 1.$$
We know that $h(\phi)=\log d$. Then  if $\Phi$ is the induced map by $\phi_d$, so we have that $h(\Phi)=\infty$.
\end{example}

\section*{Acknowledgments}
 It is a pleasure to thanks Artur Lopes for many discussions concerning the
 topics of this article and also for calling our attention to Kloeckner's paper.

\medskip
Received xxxx 20xx; revised xxxx 20xx.
\medskip
\end{document}